\newtheorem{theorem}{Theorem}[section]
\newtheorem{lemma}[theorem]{Lemma}
\newtheorem{definition}[theorem]{Definition}
\newtheorem{remark}[theorem]{Remark}
\numberwithin{equation}{section}
\title[Inverse random source problem]{Stability for an inverse random source problem of the biharmonic Schr\"odinger equation}
\author[T. Wang]{Tianjiao Wang}\address{School of Mathematical Sciences, Zhejiang University, Hangzhou 310058, China}}\email{wangtianjiao@zju.edu.cn}
\author[X. Xu]{Xiang Xu}
\address{School of Mathematical Sciences, Zhejiang University, Hangzhou 310058, China}
\email{xxu@zju.edu.cn}
\author[Y. Zhao]{Yue Zhao}
\address{School of Mathematics and Statistics, and Key Lab NAA-MOE, Central China Normal University,
Wuhan 430079, China}
\email{zhaoyueccnu@163.com}
\subjclass[2010]{35R60, 35R30, 78A46, 60H15.}
\keywords{the biharmonic Schr\"odinger equation, Gaussian random function, inverse source problem, stability}
\begin{document}

\begin{abstract}
In this paper, we study the inverse random source scattering problem for the biharmonic Schr\"odinger equation in two and three dimensions. The driven source is assumed 
to be a generalized microlocally isotropic Gaussian random function whose covariance operator is a classical  pseudodifferential operator. 
We examine the meromorphic continuation and estimates for the resolvent of the biharmonic Schr\"odinger operator with respect to complex frequencies, which yields the well-posedness of the direct problem and a Born series expansion for the solution.
For the inverse problem, we present quantitative stability estimates in determining the micro-correlation strength of the random source by using the correlation far-field data at multiple frequencies. The key ingredient in the analysis is employing scattering theory to obtain the analytic domain and an upper bound for the resolvent of the biharmonic Schr\"odinger operator and applying a quantitative analytic continuation principle in complex theory. The analysis only requires data in a finite
interval of frequencies.

\end{abstract}

\maketitle

\section{Introduction}

Inverse source scattering problem aims at determining the source from appropriate measurements away from its support. Such kind of inverse problems arise in many scientific and industrial areas including quantum physics, biomedical imaging, radar and sonar, and geophysical exploration \cite{isakov}. 
In situations involving unpredictable environments, incomplete knowledge of the system, and random measurement noise,
it is usually assumed that the source is created by random process. As a consequence, the scattered field is also random. Compared with their deterministic counterparts, stochastic inverse problems are more difficult to handle due to randomness and uncertainties. In these situations, it is less meaningful to characterize the source by a particular realization but to determine their statistical properties. We refer the reader to \cite{bao2016random, bao2017random, liang2024stability, li2022far} and references therein for some recent advancements on inverse random source problems in acoustic, elastic and electromagnetic waves.

Recently, scattering problems for biharmonic waves arising from thin plate elasticity have received much attention due to their important applications in 
the control of destructive surface waves and the construction of ultrabroadband elastic cloaking devices \cite{3, 4, 5}. 
For the inverse random source problem modeled by an additive white noise, a regularized Kaczmarz method  was developed in \cite{LW_1} to reconstruct the mean and variance of the random source by multi-frequency data.
Apart from the white noise, another important random source function was considered in \cite{li2022far}, which is called the generalized microlocally isotropic Gaussian (GMIG) random field. GMIG is a generalized random function whose covariance operator is assumed to be a classical pseudodifferential operator with a special form of principle symbol introduced by \cite{lassas2008inverse}. For biharmonic wave equation in the absence of potentials, it was proved  in \cite{li2022far} that the principle symbol of the covariance operator could be uniquely recovered by the far-field correlation data measured at all high frequencies.

Driven by the signification applications, we consider an inverse random source scattering problem for the biharmonic Schr\"odinger equation
\begin{align}\label{DP}
		(\Delta^2 - k^4 + V)u=f &\quad \text{in}\quad \mathbb{R}^d,
\end{align} 
where $d=2, 3$, and $k>0$ is the wavenumber. $V$ is the bounded potential function and is assumed to have a compact support.
The scattered field $u$ and its Laplacian $\Delta u$ are required to satisfy the Sommerfeld radiation condition
\begin{align}\label{src}
 \lim\limits_{r \to \infty} r^{\frac{d-1}{2}} (\partial_r u- {\rm i}k u)=0, \quad \lim\limits_{r \to \infty} r^{\frac{d-1}{2}} (\partial_r (\Delta u)- {\rm i}k (\Delta u))=0 &\quad r=|x|.
\end{align}
The source $f$ is assumed to be a generalized microlocally isotropic Gaussian random field of order $-m$ such that its covariance operator is a classical pseudodifferential 
operator with principle symbol $h(x)|\xi|^{-m}$, where $h$ is called the micro-correlation strength of the random source. 
%Note that when $m=0$, the random source has the form $f = \sqrt{h}\dot{W}$
%where $\dot{W}$ denotes the white noise.

Although uniqueness and numerical reconstruction methods have been extensively investigated for inverse random source scattering problems, little is known about the corresponding stability estimates. 
A logarithmic stability was obtained in \cite{liang2024stability} for the inverse random source problem of the Helmholtz equation in inhomogeneous medium driven by 
a white noise. However, the proof in \cite{liang2024stability} relies on the It$\acute{\rm o}$ isometry of the white noise, which might not be available for GMIG random fields.
In recent works \cite{RSWXZ, WXZ_HN}, stability estimates for inverse random source problems driven by GMIG random fields were proved for acoustic and elastic waves.
Motivated by \cite{WXZ_HN}, the goal of this work is to derive quantitative stability estimates for the inverse random source problem driven by GMIG random field for the fourth-order biharmonic Schr\"odinger equation in two and three dimensions. Compared with second-order wave equations considered in \cite{WXZ_HN}, due to the increase of the order, the analysis for the biharmonic operator is much more sophisticated since the properties of the solutions for the higher-order equations are more complicated.
Another issue is that the Green function of the biharmonic wave equation in two dimensions involves Hankel function, which requires more delicate analysis than the exponential one in three dimensions.

We examine both the direct and inverse random source problems for the biharmonic Schr\"odinger equation in two and three dimensions. For the direct problem, we investigate the meromorphic continuation of the resolvent of the biharmonic Schr\"odinger operator, which yields an analytic domain and estimates for the resolvent with respect to complex frequencies. It should be noticed that for $m\leq d$, the potential belongs to Sobolev space with a  negative smoothness index, which is too rough to exist pointwisely and should be understood as a distribution. Hence, the direct problem requires a careful analysis and the classical scattering theory in \cite{DZ} cannot be applied directly.
The well-posedness of the direct problem follows from the analysis of the resolvent. Moreover, we show that the solution admits a Born series expansion, which
is useful in the study of the inverse problem. 

As for the inverse problem, we utilize multi-frequency far-field pattern to recover the strength of the random source. To achieve this goal,
we expand the far-field pattern into Born series and establish a relation between the strength of the random source and the correlation of the zeroth-order term in the Born series of the far-field pattern. To handle the higher-order terms, we employ resolvent estimates to obtain decay estimates at high frequencies. Then the stability follows from an application of quantitative unique continuation in complex theory. More importantly, we also present another stability estimate by using far-field pattern averaged over
the high-frequency band, which quantifies a uniqueness result implied by \cite{li2022far}. Such type of averaged data was widely employed to prove uniqueness results in inverse random scattering problems, see e.g. \cite{CARO, lassas2008inverse, li2022far}.
To handle this case, we conduct more sophisticated analysis for the first-order term in Born series
expansion of the far-field pattern, instead of applying the resolvent estimate directly. Our method only requires data in a finite
interval of frequencies, while uniqueness results in \cite{CARO, lassas2008inverse, li2022far, LW_2} require data measured 
at all high frequencies. 

This paper is organized as follows. In Section \ref{preli}, we introduce a mathematical description of the generalized
Gaussian random field and some standard Sobolev spaces. Section \ref{mero} is devoted to the study of the meromorphic continuation of 
the resolvent for the biharmonic Schr\"odinger operator. An analytic domain and resolvent estimates are achieved, which lead to the well-posedness of the 
direct problem and a Born series expansion of the solution. Section \ref{ip} addresses stability estimates for the inverse random source problem.

\section{Preliminaries}\label{preli}

In this section, we introduce the generalized microlocally isotropic Gaussian random field. Denote the space of test functions by $\mathcal{D}$, which is $C_0^\infty(\mathbb R^d;\mathbb R)$ equipped with a locally convex topology.
Denote the dual of $\mathcal{D}$ by $\mathcal{D}'$ which is the generalized function space.
Given a complete probability space $(\Omega, \mathcal{A}, \mathbb P)$, a real-valued random field $g : \Omega \to \mathcal{D}'$ is called a generalized microlocally isotropic Gaussian random field if $\omega \mapsto \langle g(\omega), {\phi} \rangle$ is a Gaussian random variable for all $\phi \in \mathcal{D}$. Obviously, $g(\omega)$ can be extended to the
space of complex-valued smooth functions with compact support as \[
\langle g(\omega), {\phi} \rangle= \langle g(\omega), \Re{\phi} \rangle + {\rm i }\langle g(\omega), \Im{\phi} \rangle, \quad \phi \in C_0^\infty(\mathbb R^d; \mathbb C).
\] The expectation $\mathbb E  g \in \mathcal{D}'$ and covariance $\mathrm{Cov}\, g : \mathcal{D} \times \mathcal{D}: \to \mathbb{R} $ are respectively defined by \[
\mathbb{E}g\, :\, \phi \mapsto \mathbb{E}\langle g, {\phi}\rangle, \quad {\phi} \in \mathcal{D},
\]\[
\mathrm{Cov}\,g\,:\, (\phi_1, \phi_2) \mapsto \mathrm{Cov}(\langle g,\phi_1 \rangle, \langle g,\phi_2 \rangle), \quad \phi_1, \phi_2 \in \mathcal{D},
\] where \[
\mathrm{Cov}(\langle g,\phi_1 \rangle, \langle g,\phi_2 \rangle)=\mathbb E [(\langle g,\phi_1 \rangle- \mathbb E \langle g,\phi_1 \rangle)(\langle g,\phi_2 \rangle- \mathbb E \langle g,\phi_2 \rangle)].
\] Define the covariance operator $C_{g}$ by \[\langle C_{g} \phi_1, \phi_2 \rangle = \mathrm{Cov}(\langle g,\phi_1 \rangle, \langle g,\phi_2 \rangle), \quad \phi_1, \phi_2 \in \mathcal{D}.\] Let $K_{g}(x,y)$ be the Schwarz kernel of $C_{g}$, i.e., \[
\langle C_{g} \phi_1, \phi_2 \rangle=\int_{\mathbb{R}^d}\int_{\mathbb{R}^d} \phi_2(y) K_{g}(x,y) \phi_1(x) \,\mathrm{d}x\mathrm{d}y.
\]  

We consider the generalized microlocally isotropic Gaussian random fields with zero expectation defined as follows. \begin{definition}\label{Def2.1}
	A generalized Gaussian random field $g$ with zero expectation is said to be microlocally isotropic of order $-m \in \mathbb{R}$ in $D \subset \mathbb{R}^d, d=2, 3,$ if its covariance operator $C_{g}$ is a classical pseudodifferential operator of order $-m$ with the principal symbol $\mu(x) |\xi|^{-m}$, where $\mu(x) \in C^\infty_0(\mathbb{R}^d; \mathbb R)$ is a real-valued function with $\text{supp} \, \mu \subset \subset D$. Here $\mu$ is called the micro-correlation strength of $g$.
\end{definition} Let $c_{g}$ be the symbol of the operator $C_{g}$, i.e., \[
C_{g}(\phi)(x)=(2 \pi)^{-d} \int_{\mathbb{R}^d} e^{{\rm i} x \cdot \xi} c_{g}(x, \xi) \hat{\phi}(\xi) \,\mathrm{d}\xi, \quad \phi \in \mathcal{D},
\] which gives \begin{equation*}
	K_{g}(x,y)=(2\pi)^{-d} \int_{\mathbb{R}^d} e^{{\rm i}(x-y) \cdot \xi} c_{g}(x, \xi) \,\mathrm{d}\xi.
\end{equation*}
The following lemma proved in \cite{CARO} concerns the regularity of the GMIG random field.
\begin{lemma}\label{Lem2.1}
	Let $g$ be a GMIG random field of order $-m$ in $D$. Then $g \in W^{\frac{m-d}{2}-\delta,p}(D)$ almost surely for all $\delta>0$ and $1<p<\infty$.
\end{lemma}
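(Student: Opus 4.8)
The plan is to deduce the almost-sure statement from the quantitative moment bound
\[
\mathbb{E}\,\|g\|_{W^{s,p}(D)}^{\,p}<\infty\qquad\text{for every }s<\tfrac{m-d}{2}\ \text{and}\ 1<p<\infty .
\]
Once this is in hand, applying it to a countable dense family of admissible pairs $(s,p)$ and using the continuous embeddings between Sobolev spaces on the bounded set $D$ yields $g\in W^{\frac{m-d}{2}-\delta,p}(D)$ almost surely for all $\delta>0$ and all $1<p<\infty$ simultaneously. To prove the moment bound I would first localize: fix $\chi\in C_0^\infty(\mathbb{R}^d;\mathbb{R})$ with $\chi\equiv1$ on a neighbourhood of $\overline D$, so that $\|g\|_{W^{s,p}(D)}\le\|\chi g\|_{W^{s,p}(\mathbb{R}^d)}=\|J^s(\chi g)\|_{L^p(\mathbb{R}^d)}$ with $J^s:=(1-\Delta)^{s/2}$ the Bessel potential. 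By Tonelli's theorem,
\[
\mathbb{E}\,\|J^s(\chi g)\|_{L^p(\mathbb{R}^d)}^{\,p}=\int_{\mathbb{R}^d}\mathbb{E}\,\bigl|\bigl(J^s(\chi g)\bigr)(x)\bigr|^p\,dx .
\]

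The heart of the matter is to control this integrand via Gaussianity and the covariance structure. The field $J^s(\chi g)$ is again a centred generalized Gaussian field, with covariance operator $B:=J^s M_\chi\,C_g\,M_\chi J^s$, where $M_\chi$ is multiplication by $\chi$. Since $C_g$ is a classical pseudodifferential operator of order $-m$ (Definition \ref{Def2.1}), $M_\chi C_g M_\chi$ is a pseudodifferential operator of order $-m$ whose Schwartz kernel $\chi(x)K_g(x,y)\chi(y)$ is compactly supported; hence $B$ is a pseudodifferential operator of order $2s-m$. The assumption $s<\tfrac{m-d}{2}$ gives $2s-m<-d$, so $B$ has a bounded continuous Schwartz kernel $B(x,y)$, and $J^s(\chi g)$ admits a modification under which $\bigl(J^s(\chi g)\bigr)(x)$ is, for every $x$, a real centred Gaussian variable with $\mathbb{E}\bigl[(J^s\chi g)(x)^2\bigr]=B(x,x)$. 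Therefore $\mathbb{E}\bigl|(J^s\chi g)(x)\bigr|^p=c_p\,B(x,x)^{p/2}$, and it remains to check $x\mapsto B(x,x)\in L^{p/2}(\mathbb{R}^d)$. Boundedness of $B(x,x)$ follows from the order count; for integrability one exploits spatial decay: realizing $J^s$ as a Bessel-potential convolution — after peeling off, when $s>0$, the local factor $(1-\Delta)^N$ — the compact support of the kernel of $M_\chi C_g M_\chi$ combined with the exponential decay of the Bessel kernels gives $|B(x,x)|\le C e^{-c|x|}$, so $B(x,x)\in L^q(\mathbb{R}^d)$ for every $q\ge1$. This closes the argument.

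I expect the pseudodifferential bookkeeping to be the only real obstacle. One must deal with the possibly rough, negative-order operator $C_g$ — when $m\le d$ its kernel is a genuine distribution along the diagonal, so $C_g$ does not act pointwise — and justify that $J^s M_\chi C_g M_\chi J^s$ is a bona fide pseudodifferential operator of order $2s-m$, that order strictly below $-d$ forces continuity and boundedness of its kernel on the diagonal, and that the cutoff together with Bessel-kernel decay produces enough spatial decay to integrate $B(x,x)^{p/2}$ over all of $\mathbb{R}^d$. All of these are standard facts of pseudodifferential and Bessel-potential calculus; once they are available, the probabilistic part is immediate, since every random object appearing is Gaussian.
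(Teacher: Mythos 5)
The paper does not prove this lemma at all: it is quoted verbatim from the cited reference \cite{CARO}, and your argument is essentially the standard proof given there (and in Lassas--P\"aiv\"arinta--Saksman): localize, apply the Bessel potential $J^s$, observe that $J^sM_\chi C_gM_\chi J^s$ has order $2s-m<-d$ so its kernel is bounded on the diagonal, use the Gaussian moment identity $\mathbb{E}|X|^p=c_p(\mathbb{E}X^2)^{p/2}$ together with Tonelli, and upgrade to all $(\delta,p)$ via a countable dense family. Your proposal is correct and takes the same route as the source the paper relies on.
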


We also introduce some function spaces.
Denote the standard Sobolev spaces by $W^{s,p}:=W^{s,p}(\mathbb{R}^d)$ and denote $H^s:=W^{s,2}$. The function spaces $H^s_{\rm loc}:=H^s_{\rm loc}(\mathbb{R}^d)$ and $W^{s,p}_{\rm comp}:=H^s_{\rm comp}(\mathbb{R}^d)$ are respectively defined by \begin{align*}
	H^s_{\rm loc}(\mathbb{R}^d) &:=\{u \in \mathcal{D}'(\mathbb{R}^d):\chi u \in H^s(\mathbb{R}^d),\forall \, \chi \in C_0^\infty(\mathbb{R}^d)\}, \\ H^s_{\rm comp}(\mathbb{R}^d) &:=\{u \in H^s(\mathbb{R}^d):\exists \, \chi \in C_0^\infty(\mathbb{R}^d),\,\chi u =u\}.
\end{align*} The bounded linear operators between two function spaces $X$ and $Y$ are denoted by $\mathcal{L}(X,Y)$ with the operator norm $\|\cdot\|_{\mathcal{L}(X,Y)}$.  Moreover, we use $a \lesssim b $ to stand for $a \le C b$, where $C>0$ is a generic constant whose specific value is not required but should be clear from the context.

\section{Meromorphic continuation and estimates for the resolvent}\label{mero}

In this section, we investigate the meromorphic continuation and resolvent estimates for the resolvent $R_V(\lambda) = (\Delta^2 - \lambda^4 + V)^{-1}$ of the biharmonic Schr\"odinger
operator in two and three dimensions.

We begin with the free resolvent $R_0(\lambda) = (\Delta^2 - \lambda^4)^{-1}$ of the biharmonic operator. Let $\widetilde R_0(\lambda) = (-\Delta - \lambda^2)^{-1}$
be the free resolvent of the Laplacian operator, whose kernel $\Phi_\lambda(x,y)$ has the explicit form
\begin{align*}
    \Phi_\lambda(x,y)=\begin{cases}
    \frac{{\rm i}}{4}H_0^{(1)}(\lambda|x-y|), & \quad d=2,\\[5pt]
        \frac{e^{{\rm i}\lambda|x-y|}}{4\pi|x-y|}, &\quad d=3.
    \end{cases}
\end{align*} 
According to the decomposition formula
\[
(\Delta^2 - \lambda^4)^{-1} = \frac{1}{2\lambda^2} \Big((-\Delta-\lambda^2)^{-1} - (-\Delta+\lambda^2)^{-1}\Big),
\]
the kernel of $R_0(\lambda)$ is
\[
G_\lambda(x,y)=\frac{1}{2\lambda^2}(\Phi_\lambda(x,y)-\Phi_{{\rm i}\lambda}(x,y)).
\] 
Hence, given $f\in L_{\rm comp}^2(\mathbb R^d)$, the resolvents of the free Laplacian operator and biharmonic operator have the representations
\[
\widetilde R_0(\lambda)(f)(x):=\int_{\mathbb R^d} \Phi_\lambda (x,y) f(y)\,\mathrm{d}y
\] 
and
\[
R_0(\lambda)(f)(x):=\int_{\mathbb R^d} G_\lambda (x,y) f(y)\,\mathrm{d}y=\frac{1}{2\lambda^2} \Big(\widetilde R_0(\lambda)(f)(x)-\widetilde R_0({\rm i}\lambda)(f)(x)\Big).
\]

In three dimensions, with the help of strong Huygens principle and the spectral theorem, we have the following analyticity and estimates for the free resolvent  
$\widetilde{R}_0(\lambda)$. The proof adapts the arguments in 
	\cite{DZ}. 
\begin{lemma}\label{lem1}
 The free resolvent $\widetilde{R}_0(\lambda)$ is analytic for $\lambda \in \mathbb{C} $ with $\Im \lambda>0$ as a family of operators\[
 \widetilde{R}_0(\lambda): H^s(\mathbb{R}^3) \to H^t(\mathbb{R}^3)
 \] for all $s \in \mathbb{R}$ and $t \in [s,s+2]$. Furthermore, given a cut-off function $\chi \in C_0^\infty(\mathbb{R}^3)$, $\widetilde{R}_0(\lambda)$ can be extended to an analytic family of operators for $\lambda \in \mathbb{C}$ as follows \[
 \chi \widetilde{R}_0(\lambda) \chi : H^s(\mathbb{R}^3) \to H^t(\mathbb{R}^3)
 \] with the resolvent estimates 
 \begin{equation}\label{res1}
     \|\chi \widetilde{R}_0(\lambda) \chi \|_{\mathcal{L}(H^s,H^t)} \lesssim (1+|\lambda|)^{t-s-1}e^{L (\Im \lambda)_-},
 \end{equation} where $s \in\mathbb R$ and $t\in [s,s+2]$, $v_-:=\max\{-v,0\}$ and $L$ is a constant larger than the diameter of the support of $\chi$.
 \end{lemma}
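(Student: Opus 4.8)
The plan is to exploit the explicit form of the kernel $\Phi_\lambda(x,y)=\frac{e^{{\rm i}\lambda|x-y|}}{4\pi|x-y|}$ in three dimensions and the Fourier multiplier representation $\widetilde R_0(\lambda)=(|\xi|^2-\lambda^2)^{-1}$ for $\Im\lambda>0$. First I would establish analyticity and the mapping property on the upper half-plane: for $\Im\lambda>0$ the symbol $(|\xi|^2-\lambda^2)^{-1}$ is smooth in $\xi$, holomorphic in $\lambda$, and satisfies $|(|\xi|^2-\lambda^2)^{-1}|\lesssim (1+|\xi|)^{-2}$ with a constant depending continuously on $\lambda$ (uniform on compact subsets of $\{\Im\lambda>0\}$, but blowing up as $\lambda$ approaches the real axis). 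This immediately gives boundedness $H^s\to H^{s+2}$, and interpolating (or simply using $(1+|\xi|)^{t-s-2}\le (1+|\xi|)^{0}$ together with the trivial bound) yields $H^s\to H^t$ for $t\in[s,s+2]$; holomorphy in $\lambda$ follows by differentiating under the integral sign, the derivative symbols being dominated by the same weight.

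The substantive part is the meromorphic (here, entire, on the whole plane) continuation of the cut-off resolvent $\chi\widetilde R_0(\lambda)\chi$. Here I would argue directly from the kernel: for $u\in H^s_{\rm comp}$ with support in $\{|x|\le L_0\}$,
\[
\chi\widetilde R_0(\lambda)\chi u(x)=\chi(x)\int_{|y|\le L_0}\frac{e^{{\rm i}\lambda|x-y|}}{4\pi|x-y|}\chi(y)u(y)\,\mathrm{d}y,
\]
and since $\chi$ is supported in $\{|x|\le L_0\}$ as well (so that $|x-y|\le L$ with $L>\mathrm{diam}(\mathrm{supp}\,\chi)$ on the integration region), the factor $e^{{\rm i}\lambda|x-y|}$ is entire in $\lambda$ with $|e^{{\rm i}\lambda|x-y|}|=e^{-\Im\lambda\,|x-y|}\le e^{L(\Im\lambda)_-}$. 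This is exactly where the strong Huygens principle enters in the form the paper alludes to: the kernel has no tail beyond $|x-y|=L$ once truncated, so no continuation obstruction arises and the exponential growth is controlled by the diameter. Analyticity in $\lambda$ then follows from holomorphy of the integrand and dominated convergence / Morera's theorem; the operator maps into functions supported in $\mathrm{supp}\,\chi$, which is a key point for the Sobolev bounds.

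For the quantitative estimate \eqref{res1} I would split into low and high frequency regimes. For $|\lambda|\lesssim 1$ the kernel bound $|\Phi_\lambda(x,y)|\lesssim |x-y|^{-1}e^{L(\Im\lambda)_-}$ together with the weak-Young / Schur test on the bounded region gives an $L^2\to L^2$ (hence $H^s\to H^s$, and with one more derivative $H^s\to H^{s+2}$, locally) bound uniform in such $\lambda$, which is absorbed into the constant since $(1+|\lambda|)^{t-s-1}\gtrsim 1$ there. For $|\lambda|\gtrsim 1$ I would use a partition of unity in frequency: on the elliptic region $|\xi|\ge 2|\lambda|$ or $|\xi|\le\tfrac12|\lambda|$ one has $|(|\xi|^2-\lambda^2)^{-1}|\lesssim (1+|\xi|)^{-2}+|\lambda|^{-2}$ and genuine gain of two derivatives with the claimed $|\lambda|$-power; on the glancing region $|\xi|\sim|\lambda|$ one loses the full elliptic gain but, after conjugating by $\chi$ and using a non-stationary phase / integration-by-parts argument on the kernel $e^{{\rm i}\lambda|x-y|}/|x-y|$ restricted to $|x-y|\le L$, recovers the factor $(1+|\lambda|)^{-1}$, which combined with the at most two derivatives lost yields $(1+|\lambda|)^{t-s-1}$. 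Throughout, the exponential factor $e^{L(\Im\lambda)_-}$ is carried along from $|e^{{\rm i}\lambda|x-y|}|$.

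The main obstacle I anticipate is the glancing-frequency estimate: proving that the cut-off resolvent loses only \emph{one} power of $|\lambda|$ (rather than two) on $|\xi|\sim|\lambda|$ — equivalently, the sharp $(1+|\lambda|)^{t-s-1}$ rather than $(1+|\lambda|)^{t-s}$. This is the classical semiclassical resolvent bound of Vainberg/Melrose type for non-trapping (here free) geometry, and the cleanest route is to borrow it from \cite{DZ} as the paper explicitly says the proof does, checking only that the truncation by $\chi$ and the bookkeeping of the exponential factor and the Sobolev indices go through verbatim; the biharmonic-specific complications do not yet appear at this stage since we are only handling the Laplacian resolvent here.
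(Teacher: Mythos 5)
Your route diverges from the paper's at every step, and it founders exactly where you predict. The paper never uses the Fourier multiplier or the explicit kernel: it writes $\widetilde R_0(\lambda)=\int_0^\infty e^{{\rm i}\lambda t}U(t)\,\mathrm{d}t$ with $U(t)=\sin(t\sqrt{-\Delta})/\sqrt{-\Delta}$, invokes the strong Huygens principle to truncate the \emph{time} integral of the cut-off operator to $\int_0^L$ (which yields the entire continuation for free), and then extracts the factor $(1+|\lambda|)^{-1}$ by a single integration by parts in $t$, using $\partial_t U(t)=\cos(t\sqrt{-\Delta})=\mathcal O_{\mathcal L(H^s,H^s)}(1)$ together with $U(t)=\mathcal O_{\mathcal L(H^s,H^{s+1})}(1)$; the case $t=s+2$ then follows from the commutator identity $\Delta(\chi\widetilde R_0\chi)=(-1-\lambda^2)\chi\widetilde R_0\chi+[\Delta,\chi]\widetilde R_0\chi$, and the remaining $t$ by interpolation. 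Note a misattribution in your write-up: the restriction $|x-y|\le L$ in your kernel formula comes solely from the compact support of $\chi$, not from Huygens; Huygens is what kills the tail of the time integral of the wave group, and that is where the length scale $L$ and the entire continuation actually originate in the paper.

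The genuine gap is the glancing-region bound. A Schur test on $e^{{\rm i}\lambda|x-y|}/|x-y|$ over a bounded set gives only $O(1)$, and the phase $\lambda|x-y|$ admits no useful non-stationary-phase argument for an $L^2\to L^2$ bound on the region $|\xi|\sim|\lambda|$; what you actually need there is the free limiting-absorption (non-trapping) resolvent estimate, which is the content of the lemma itself. Deferring it to \cite{DZ} is not a proof, since the statement you would be citing is precisely \eqref{res1} with $t=s$. The wave-group representation is the device that makes this step elementary — one integration by parts against $e^{{\rm i}\lambda t}$ over a compact time interval — and it also handles all real $s$ uniformly via the spectral theorem, whereas your kernel and Schur arguments live naturally on $L^2$ and would need an additional duality-plus-interpolation step to reach negative and fractional $s$. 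Your treatment of the upper half-plane and of the low-frequency regime is fine as written.
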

 
 \begin{proof}
From the functional calculus in \cite[Appendix B.2]{DZ} we denote \[
 U(t):=\frac{\sin t\sqrt{-\Delta}}{\sqrt{-\Delta}}, 
 \] which yields a representation of $\widetilde{R}_0(\lambda)$ as follows
 \[
 \widetilde{R}_0(\lambda)=\int_0^\infty e^{{\rm i}\lambda t} U(t)\,\mathrm{d}t.
 \] By the spectral representation in \cite[Appendix B.1]{DZ} we have the mapping properties 
\[
\partial_t^n U(t) \,:\, H^s(\mathbb{R}^3) \to H^{s-n+1}(\mathbb{R}^3),\quad n \in \mathbb{N},\quad s \in \mathbb{R}.
\] Therefore, for $\Im \lambda>0$ the family of operators \[
\widetilde{R}_0(\lambda): H^s(\mathbb R^3) \to H^s(\mathbb R^3)
\] is analytic with respective to $\lambda$. For $f \in H^s$, we denote $u(\lambda)=\widetilde{R}_0(\lambda)f$ which gives $\Delta u(\lambda)=-\lambda^2u(\lambda) -f \in H^s$. Hence, both $u(\lambda)$ and $\Delta u (\lambda)$ are analytic families of functions in $H^s$ when $\Im \lambda>0$. For $\phi \in H^{s+2}$, noting \begin{align*}
\langle u(\lambda), \phi \rangle_{H^{s+2}}&=\langle (I-\Delta) u(\lambda), (I-\Delta) \phi \rangle_{H^{s}}\\
&=\langle \Delta u(\lambda), \Delta \phi \rangle_{H^{s}}+\langle u(\lambda), \phi \rangle_{H^{s}}-2\langle \Delta u(\lambda), \phi \rangle_{H^{s}},
\end{align*}
we have for $\lambda$ 
with $\Im \lambda>0$ that the function $\langle u(\lambda), \phi \rangle_{H^{s+2}}$
is analytic with respect to $\lambda$. Thus, $u(\lambda)$ is an analytic family of functions in $H^{s+2}$, which yields that $\widetilde R_0(\lambda)$ is also an analytic family of operators for $\Im\lambda>0$ in the following sense \[
\widetilde R_0(\lambda): H^s \to H^{s+2}.
\]

We next prove the resolvent estimates and the analycity of $\chi \widetilde R_0(\lambda) \chi $ for $\lambda \in \mathbb C$. The strong Huygens's principle implies that 
\[
 (U(t) \chi)(x)=0,\quad \text{when} \,\, t>\sup\{|x-y|:y \in \text{supp} \chi\}.
 \]  Then we obtain \[
\chi  \widetilde{R}_0(\lambda) \chi =\int_0^L e^{{\rm i}\lambda t} \chi U(t) \chi \,\mathrm{d}t,
\] where $L> \text{diam}\,\text{supp} \chi$.
  Furthermore, we have \[
\chi  \widetilde{R}_0(\lambda) \chi=({\rm i}\lambda)^{-1}\int_0^L  \partial_t(e^{{\rm i}\lambda t}) \chi U(t) \chi \,\mathrm{d}t=-({\rm i}\lambda)^{-1}\int_0^L  e^{{\rm i}\lambda t} \chi  \partial_t(U(t)) \chi \,\mathrm{d}t.
\] It follows from the spectral representation that $U(t)=\mathcal O_{\mathcal{L}(H^s, H^{s+1})}(1)$,  $\partial_tU(t)=\cos t{\sqrt{-\Delta}}=\mathcal O_{\mathcal{L}(H^s, H^s)}(1)$. Hence, we know when $s=t$ or $s=t+1$ that \[
\chi\widetilde{R}_0(\lambda)\chi : H^s  \to H^t
\] is an analytic family of operators for $\lambda \in \mathbb C$ and the resolvent estimates \eqref{res1} hold. For the case of $t=s+2$, note that \[
\Delta(\chi \widetilde{R}_0(\lambda) \chi)=\chi \Delta \widetilde{R}_0(\lambda) \chi+[\Delta, \chi] \widetilde{R}_0(\lambda) \chi= (-1-\lambda^2)\chi \widetilde{R}_0(\lambda)\chi + [\Delta, \chi] \widetilde{R}_0(\lambda) \chi,
\] where $[\Delta, \chi]: =\Delta \chi -\chi \Delta$ is a first-order differential operator. Then the above identities together with the analyticity and resolvent estimates for $t=s, s+1$ imply the corresponding results for the case $t=s+2$.
The remaining cases follow by applying the interpolation inequality. The proof is now complete.
 \end{proof}

 For two-dimensional case, the strong Huygens principle is no longer available. To investigate the resolvent, we consider  the integral representation of the Hankel function \cite{Finco}
 \begin{equation} \label{Hankel}
		H_0^{(1)}(z) = C e^{{\rm i}z} \int_{0}^\infty e^{-s}s^{-1/2}(s/2-{\rm i}z)^{-1/2}\,\mathrm{d}s,
	\end{equation}
 where $C$ is a constant, and the branch cut of $\sqrt{z}$ is chosen such that  $\sqrt{z}$ is analytic for $z \in \mathbb C \backslash (-\infty,0]$ and $\sqrt{1}=1$. Thus, for $\lambda \in \mathcal S :=\mathbb C \backslash (-\infty,0]{\rm i}$ we have $\sqrt{s/2-{\rm i}\lambda|x-y|} \in \mathbb C \backslash (-\infty,0]$ for any $x,y \in \mathbb R^2$ and $s>0$. 
Moreover, from \eqref{Hankel} we have the estimate \begin{align}\label{Hankelesti}
     |\Phi_\lambda(x,y)| \lesssim \frac{e^{(\Im \lambda)_-|x-y|}}{\sqrt{\alpha(\lambda)|x-y|}} \int_0^\infty e^{-t} t^{-\frac{1}{2}}\,\mathrm{d}t \lesssim \frac{e^{(\Im \lambda)_-|x-y|}}{\sqrt{\alpha(\lambda)|x-y|}}, \quad \lambda \in \mathcal S,
 \end{align} 
 where $a_-:=\max\{-a,0\}$ and $\alpha(\lambda)$ is given by \begin{align*}
     \alpha(\lambda)=\begin{cases}
         |\Re \lambda|, & \Im \lambda \le 0,\\
         |\lambda|, & \Im \lambda > 0.
     \end{cases}
 \end{align*} Using \eqref{Hankelesti} and the Schur's Lemma, we derive the analyticity and estimates of the resolvent $\widetilde R_0(\lambda)$
 in the following lemma in two dimensions. 
 \begin{lemma}\label{lemma2}
	The free resolvent $\widetilde R_0(\lambda)$ is analytic for $\lambda \in \mathcal{S}$, $\Im \lambda>0$ as a family of operators \[
	\widetilde R_0(\lambda): H^s(\mathbb{R}^2) \to H^t(\mathbb{R}^2)
	\] for all $s \in \mathbb{R}$ and $t \in [s,s+2]$. Furthermore, given a cut-off function $\chi \in C_0^\infty(\mathbb{R}^2)$, $\widetilde{R}_0(\lambda)$ can be extended to 
	an analytic family of operators for $\lambda \in \mathcal{S}$ as follows \[
	\chi \widetilde R_0(\lambda) \chi : H^s(\mathbb{R}^2) \to H^s(\mathbb{R}^2)
	\] with the resolvent estimates 
	\begin{equation*}
		\|\chi \widetilde R_0(\lambda) \chi \|_{\mathcal{L}(H^s,H^t)} \lesssim \alpha(\lambda)^{-1/2}(1+|\lambda|)^{t-s}e^{L (\Im \lambda)_-},
	\end{equation*} where $s \in \mathbb R$ and $t\in [s,s+2]$, $t_-:=\max\{-t,0\}$ and the constant $L$ is only dependent on $supp \, \chi$.
\end{lemma}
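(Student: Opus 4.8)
The plan is to mirror the structure of the three-dimensional case in Lemma \ref{lem1}, but replacing the strong Huygens principle (which fails in $\mathbb{R}^2$) with the integral representation \eqref{Hankel} and the pointwise bound \eqref{Hankelesti}, and then invoking Schur's test to control the integral operator norm. First I would establish analyticity of $\widetilde{R}_0(\lambda):H^s(\mathbb{R}^2)\to H^t(\mathbb{R}^2)$ for $\Im\lambda>0$ exactly as in the three-dimensional proof: the kernel $\Phi_\lambda(x,y)=\tfrac{\rm i}{4}H_0^{(1)}(\lambda|x-y|)$ depends analytically on $\lambda$ in the upper half-plane, the operator gains two derivatives because $(-\Delta-\lambda^2)\widetilde{R}_0(\lambda)f=f$, and the $H^{s+2}$-analyticity follows by the same bookkeeping with $\langle u(\lambda),\phi\rangle_{H^{s+2}}$ expanded via $(I-\Delta)$ that appears in the proof of Lemma \ref{lem1}. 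The intermediate values $t\in(s,s+2)$ come from interpolation. Since this part does not see the dimension in any essential way, I would simply remark that it follows verbatim from the argument for Lemma \ref{lem1}.

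The new content is the meromorphic (here: analytic on $\mathcal{S}$) continuation of $\chi\widetilde{R}_0(\lambda)\chi$ and the resolvent estimate. For this I would work directly with the kernel. The operator $\chi\widetilde{R}_0(\lambda)\chi$ has Schwartz kernel $\chi(x)\Phi_\lambda(x,y)\chi(y)$, and since $\chi$ has compact support, $|x-y|$ ranges over a bounded interval; representation \eqref{Hankel} shows $\Phi_\lambda(x,y)$ extends analytically from $\{\Im\lambda>0\}$ to all of $\mathcal{S}=\mathbb{C}\setminus(-\infty,0]{\rm i}$, with the stated branch of the square root making the integrand well-defined for every $x,y\in\mathbb{R}^2$ and $s>0$. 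Analyticity of the operator-valued map then follows from analyticity of the kernel together with a dominated-convergence/Morera argument, using the uniform bound \eqref{Hankelesti} on the compact support of $\chi$. For the norm estimate in $\mathcal{L}(H^s,H^t)$ with $s=t$, I would apply Schur's Lemma to the kernel $\chi(x)\Phi_\lambda(x,y)\chi(y)$: by \eqref{Hankelesti} this kernel is bounded by $C\,\alpha(\lambda)^{-1/2}e^{L(\Im\lambda)_-}|x-y|^{-1/2}$ times the cutoffs, and since $|x-y|^{-1/2}\in L^1_{\rm loc}(\mathbb{R}^2)$, the row and column integrals $\int \chi(x)|x-y|^{-1/2}\chi(y)\,\mathrm{d}y$ are uniformly bounded, giving the $L^2\to L^2$ bound $\lesssim\alpha(\lambda)^{-1/2}e^{L(\Im\lambda)_-}$. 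To upgrade to general $s$ I would conjugate by $(I-\Delta)^{s/2}$ (which commutes with the flow up to lower-order cutoff-commutator terms, exactly the trick used at the end of Lemma \ref{lem1}'s proof for the $t=s+2$ case), and the gain of the factor $(1+|\lambda|)^{t-s}$ for $t>s$ comes from the same identity $\Delta(\chi\widetilde{R}_0(\lambda)\chi)=(-1-\lambda^2)\chi\widetilde{R}_0(\lambda)\chi+[\Delta,\chi]\widetilde{R}_0(\lambda)\chi$, iterated and combined with interpolation — note that here the exponent is $t-s$ rather than $t-s-1$ because we lose the extra $|\lambda|^{-1}$ gain that strong Huygens provided in three dimensions.

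The main obstacle I anticipate is the bookkeeping near $\lambda=0$ and along the imaginary axis: the weight $\alpha(\lambda)$ is defined piecewise and degenerates ($\alpha(\lambda)\to 0$) as $\lambda$ approaches $(-\infty,0]{\rm i}$ or the origin, so one must be careful that \eqref{Hankelesti} is genuinely uniform on compact subsets of $\mathcal{S}$ and that the Schur bounds do not hide a hidden dependence on $\Re\lambda$ near zero. A secondary technical point is justifying that differentiating under the integral sign in \eqref{Hankel} is legitimate and that the resulting kernel derivatives still satisfy integrable bounds on the support of $\chi$ — this is what makes the operator-valued analyticity rigorous rather than just formal. Once these uniformity issues are handled, the rest is a routine combination of Schur's test, the commutator identity for $\Delta$, and interpolation, following the template already set in Lemma \ref{lem1}.
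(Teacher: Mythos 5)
Your overall strategy coincides with the paper's: Schur's lemma applied to the kernel bound \eqref{Hankelesti} gives the $L^2\to L^2$ estimate, the commutator identity $\Delta(\chi\widetilde R_0(\lambda)\chi)=(-1-\lambda^2)\chi\widetilde R_0(\lambda)\chi+[\Delta,\chi]\widetilde R_0(\lambda)\chi$ handles $t=s+2$, interpolation fills in the intermediate cases, and a density plus locally-uniform-convergence argument upgrades analyticity of the matrix elements $\langle\chi\widetilde R_0(\lambda)\chi f,g\rangle$ (which is read off from \eqref{Hankel}) to analyticity of the operator family on $\mathcal S$. The first assertion for $\Im\lambda>0$ does carry over from Lemma \ref{lem1}, since the representation $\widetilde R_0(\lambda)=\int_0^\infty e^{{\rm i}\lambda t}U(t)\,\mathrm{d}t$ and the ellipticity bootstrap are dimension-independent there.

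There is, however, a circularity in your derivation of the factor $(1+|\lambda|)^{t-s}$. The commutator $[\Delta,\chi]=(\Delta\chi)+2\nabla\chi\cdot\nabla$ is a first-order operator, so bounding $[\Delta,\chi]\widetilde R_0(\lambda)\chi f$ in $H^s$ already requires the mapping property $H^s\to H^{s+1}$ of the cutoff resolvent, i.e.\ the case $t=s+1$, before the identity can deliver $t=s+2$; and you cannot obtain $t=s+1$ by interpolating between $t=s$ and $t=s+2$, since the latter is what you are trying to prove. The paper closes this loop with a separate, direct step: it writes $\nabla(\chi\widetilde R_0(\lambda)\chi f)$ via the convolution structure and uses the pointwise bound $|\nabla_x\Phi_\lambda(x,y)|\lesssim |\lambda|\,e^{(\Im\lambda)_-|x-y|}|\alpha(\lambda)(x-y)|^{-1/2}+e^{(\Im\lambda)_-|x-y|}\alpha(\lambda)^{-1/2}|x-y|^{-3/2}$ obtained by differentiating \eqref{Hankel} in $x$; the explicit factor $|\lambda|$ in the first term is the genuine source of one power of $(1+|\lambda|)$ per derivative, and the second term is still Schur-integrable in $\mathbb R^2$. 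Your proposal invokes differentiation under the integral sign only for analyticity in $\lambda$, so this $x$-derivative kernel estimate is the missing ingredient. A smaller divergence: for $s<0$ the paper passes to the adjoint via the symmetry $\Phi_\lambda(x,y)=\Phi_\lambda(y,x)$ rather than conjugating by $(I-\Delta)^{s/2}$; your conjugation route generates commutators $[(I-\Delta)^{s/2},\chi]$ of order $s-1$ that need the same kind of inductive control, so the duality argument is the cleaner way to cross $s=0$.
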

\begin{proof}
    By Schur's lemma \cite[Appendix A.5]{DZ} and \eqref{Hankelesti} we have \begin{equation}\label{L2}
		\|\chi \widetilde R_0(\lambda) \chi\|_{L^2 \to L^2} \lesssim \alpha(\lambda)^{-\frac{1}{2}}e^{L(\Im \lambda)_-}
	\end{equation} with $L> \text{diam}\,\text{supp} \chi$.
	Then we have for $f \in H^1(\mathbb{R}^2)$
	\begin{align}
		\partial^\alpha (\chi \widetilde R_0(\lambda) \chi f) &=\partial^\alpha(\chi (\Phi_\lambda * (\chi f)))=(\partial^\alpha  \chi)(\Phi_\lambda * (\chi f))+\chi (\Phi_\lambda * \partial^\alpha (\chi f)) \notag\\ &=(\partial^\alpha  \chi)(\Phi_\lambda * (\chi f))+ \chi (\Phi_\lambda * \partial^\alpha \chi f)+\chi (\Phi_\lambda * \partial^\alpha f \chi ), \label{Cut}
	\end{align}  where $|\alpha|=1$. Combining \eqref{L2}--\eqref{Cut} gives the resolvent estimate for $s=t=1$, which also holds for all $s=t \in \mathbb{N}$ by induction. Further, using the interpolation of fractional order Sobolev spaces in \cite[7.57]{adams}, i.e., \[H^s(\Omega)=[L^2(\Omega),H^n(\Omega)]_{s/n}, \quad  n>s>0,\quad n \in \mathbb{N},\] we have that the resolvent estimates hold for all $s=t \ge 0$. 
	
	For the case $s=t<0$, using the definition of convolution of distributions and the symmetry of the fundamental solution, we obtain \begin{align*}
		\left\langle\chi \widetilde R_0(\lambda) \chi f, \phi\right\rangle &=\left\langle \Phi_\lambda * (\chi f),\chi \phi  \right\rangle=\left\langle \chi f, \left\langle\Phi_\lambda(x-y), \phi \chi \right\rangle_x \right\rangle_y \\ &=\left\langle \left\langle \chi f , \Phi_\lambda(y-x),\phi \chi  \right\rangle_x \right\rangle_y
		\\&= \left\langle \chi f , \Phi_\lambda * (\phi \chi) \right\rangle=\left\langle f,\chi \widetilde R_0(\lambda)\chi \phi \right\rangle, \quad \phi \in C_0^\infty(\mathbb{R}^2).
	\end{align*}
	Thus, we have that the resolvent $\chi R_0(\lambda) \chi: H^s\to H^s$ is a bounded operator with the corresponding resolvent estimate as in the lemma. Next, we consider the gradient of $\chi\widetilde R_0(\lambda)\chi f$. By direct calculations we have
	\begin{align*}
 \nabla(\chi\widetilde R_0(\lambda)\chi f) &=(\nabla\chi )\widetilde R_0(\lambda)\chi f+\chi \nabla(\widetilde R(\lambda)\chi f)=(\nabla\chi )\Phi_\lambda *(\chi f)+\chi \nabla(\Phi_\lambda * (\chi f)) \\ &=(\nabla\chi )\Phi_\lambda *(\chi f)+\chi (\nabla\Phi_\lambda * (\chi f)).
    \end{align*} From \eqref{Hankel}, there holds \begin{align*}
		  |\nabla_x \Phi_\lambda(x,y)| \lesssim \frac{|\lambda|e^{(\Im \lambda)_-|x-y|}}{|\alpha(\lambda)(x-y)|^{\frac{1}{2}}} + \frac{e^{(\Im \lambda)_-|x-y|}}{|\alpha(\lambda)|^\frac{1}{2}|x-y|^{\frac{3}{2}}}, \quad \lambda \in \mathcal S.
	\end{align*} Therefore, following an analogous argument as for the case $s=t$ we obtain \[
    \|\chi \widetilde { R}_0(\lambda)\chi\|_{H^s \to H^{s+1}} \lesssim |\alpha(\lambda)|^{-1/2}(1+|\lambda|)e^{L (\Im \lambda)_-}.
    \] As the following identity holds \begin{equation}\label{sec}
\Delta(\chi \widetilde R_0(\lambda) \chi )=\chi  \Delta \widetilde R_0(\lambda) \chi+[\Delta, \chi] \widetilde R_0(\lambda) \chi= (-1-\lambda^2)\chi \widetilde R_0(\lambda)\chi + [\Delta, \chi] \widetilde R_0(\lambda) \chi,
\end{equation} where $[\Delta, \chi]: =\Delta \chi -\chi \Delta $ is a first order differential operator, 
it follows from \eqref{sec} and the corresponding estimates for $t=s,s+1$ that the resolvent estimate for $t=s+2$ holds.
Finally, an application of the interpolation inequality leads to the resolvent estimates of the remaining cases.  

Now we prove the analyticity. Fixing $s \in \mathbb R$ and $t \in [s,s+2]$, for any $f \in H^s$ and $g \in H^{-t}$, there exist sequences $\{f_n\}, \{g_n\} \subset C_0^\infty(\mathbb R^2)$ such that $f_n \to f$ in $H^s$ and $g_n \to g$ in $H^{-t}$. Note that \begin{align*}
    \langle \chi \widetilde R_0( \lambda ) \chi f_n ,g_n\rangle=\frac{\rm i}{4}\int_{\mathbb R}\int_{\mathbb R}\chi(x)H_0^{(1)}(\lambda|x-y|)\chi(y)f_n(y)g_n(x)\,\mathrm{d}x\,\mathrm{d}y
\end{align*} 
and $\sqrt{s/2-{\rm i}\lambda|x-y|} \in \mathbb C \backslash (-\infty,0]$ for $\lambda \in \mathcal S$. Thus, the representation \eqref{Hankel} implies that $\langle \chi \widetilde R_0( \lambda ) \chi f_n ,g_n\rangle$ is analytic for $\lambda \in \mathcal S$. For any $K \subset \subset\mathcal S$, by the resolvent estimate derived above, we have $\langle \chi \widetilde R_0( \lambda ) \chi f_n ,g_n\rangle \to \langle \chi \widetilde R_0( \lambda ) \chi f ,g\rangle$ uniformly for $\lambda \in K$ as $n\to\infty$. Hence, $\langle \chi \widetilde R_0( \lambda ) \chi f ,g\rangle$ is also analytic for $\lambda \in \mathcal S$. Therefore, $\chi \widetilde R_0( \lambda ) \chi : H^s \to H^t$ is an analytic family of operators for $\lambda \in \mathcal S$.
The proof is complete.
\end{proof}

According to the decomposition of the resolvent of the biharmonic operator 
\[R_0(\lambda)(f)=\frac{1}{2\lambda^2} \Big(\widetilde R_0(\lambda)(f)-\widetilde R_0({\rm i}\lambda)(f)\Big),\] 
 the following lemma on analyticity and estimates for $R_0(\lambda)$ in two and three dimensions
is a direct consequence of Lemma \ref{lem1} and \ref{lemma2} for the Laplacian operator. The proof is omitted for brevity.
\begin{lemma}\label{lem3}
   \begin{enumerate}
       \item In three-dimensional case, the free resolvent $R_0(\lambda)$ is analytic for $\lambda \in \mathbb{C}\backslash \{0\}$ with $\Im \lambda>0, \Re\lambda>0$ as a family of operators \[
R_0(\lambda): H^s(\mathbb{R}^3) \to H^{t}(\mathbb{R}^3)
 \] for all $s \in \mathbb{R}$ and $ t \in [s,s+4]. $ Furthermore, given a cut-off function $\chi \in C_0^\infty(\mathbb{R}^3)$, $R_0(\lambda)$ can be extended to a family of analytic operators for $\lambda \in \mathbb{C}\setminus\{0\}$ as follows \[
 \chi R_0(\lambda) \chi : H^s(\mathbb{R}^3) \to H^t(\mathbb{R}^3)
 \] with the resolvent estimates 
 \begin{equation*}
     \|\chi R_0(\lambda) \chi \|_{\mathcal{L}(H^s,H^t)} \lesssim |\lambda|^{-2}(1+|\lambda|)^{t-s-1}(e^{L (\Im \lambda)_-} + e^{L (\Re \lambda)_-}),
 \end{equation*} where $s \in \mathbb R$ and $t\in [s, s+4]$, $v_-:=\max\{-v,0\}$ and $L$ is a constant larger than the diameter of the support of $\chi$.
 \item In two-dimensional case, the free resolvent $R_0(\lambda)$ is analytic for $\lambda \in \mathcal S$ with $\Im \lambda>0, \Re\lambda>0$ as a family of operators \[
R_0(\lambda): H^s(\mathbb{R}^2) \to H^{t}(\mathbb{R}^2)
 \] for all $s \in \mathbb{R}$ and $ t \in [s,s+4]. $ Furthermore, given a cut-off function $\chi \in C_0^\infty(\mathbb{R}^2)$, $R_0(\lambda)$ can be extended to a family of analytic operators for $\lambda \in \mathcal S$ as follows \[
 \chi R_0(\lambda) \chi : H^s(\mathbb{R}^2) \to H^t(\mathbb{R}^2)
 \] with the resolvent estimates 
 \begin{equation*}
     \|\chi R_0(\lambda) \chi \|_{\mathcal{L}(H^s,H^t)} \lesssim (\alpha(\lambda)^{-\frac{1}{2}}+\alpha({\rm i}\lambda)^{-\frac{1}{2}})|\lambda|^{-2}(1+|\lambda|)^{t-s}(e^{L (\Im \lambda)_-} + e^{L (\Re \lambda)_-}),
 \end{equation*} where $s \in \mathbb R$ and $t\in [s, s+4]$, $v_-:=\max\{-v,0\}$ and $L$ is a constant larger than the diameter of the support of $\chi$.\end{enumerate}
\end{lemma}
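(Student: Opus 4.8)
The plan is to deduce Lemma \ref{lem3} directly from Lemmas \ref{lem1} and \ref{lemma2} via the decomposition
\[
R_0(\lambda)(f)=\frac{1}{2\lambda^2}\Big(\widetilde R_0(\lambda)(f)-\widetilde R_0({\rm i}\lambda)(f)\Big),
\]
so that $\chi R_0(\lambda)\chi=\frac{1}{2\lambda^2}\big(\chi\widetilde R_0(\lambda)\chi-\chi\widetilde R_0({\rm i}\lambda)\chi\big)$. The key observation is that the map $\lambda\mapsto\widetilde R_0(\lambda)$ is already handled by the earlier lemmas, and the only new ingredients are: (i) checking that the argument ${\rm i}\lambda$ stays in the admissible region when $\lambda$ does, (ii) tracking the effect of the scalar prefactor $\frac{1}{2\lambda^2}$, and (iii) combining the two resolvent bounds. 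For three dimensions, $\widetilde R_0(\cdot)$ extends analytically to all of $\mathbb C$ by Lemma \ref{lem1}, so the only constraint is $\lambda\neq 0$ coming from the $\lambda^{-2}$ factor; hence $\chi R_0(\lambda)\chi$ is analytic on $\mathbb C\setminus\{0\}$, and the orientation conditions $\Im\lambda>0,\Re\lambda>0$ in the first part of the statement merely single out the physical sheet where the unextended $R_0(\lambda):H^s\to H^t$ (without cut-offs) is defined. For two dimensions, one needs $\lambda\in\mathcal S=\mathbb C\setminus(-\infty,0]{\rm i}$ for the $\widetilde R_0(\lambda)$ term and simultaneously ${\rm i}\lambda\in\mathcal S$ for the $\widetilde R_0({\rm i}\lambda)$ term; I would verify that $\lambda\in\mathcal S\Rightarrow{\rm i}\lambda\in\mathcal S$ since rotating $\mathbb C\setminus(-\infty,0]{\rm i}$ by $\tfrac{\pi}{2}$ yields $\mathbb C\setminus[0,\infty)$, wait — more carefully, $\mathcal S$ excludes the negative imaginary axis, and multiplication by ${\rm i}$ sends the negative imaginary axis to the positive real axis, so ${\rm i}\mathcal S=\mathbb C\setminus[0,\infty)\supset\cdots$; in any case, on the sub-region $\{\Im\lambda>0,\Re\lambda>0\}$ both $\lambda$ and ${\rm i}\lambda$ avoid the forbidden ray, and the analytic extension statement is correspondingly phrased for $\lambda\in\mathcal S$ with $\chi$-cut-offs, which I would double-check is consistent with ${\rm i}\lambda$ also remaining in $\mathcal S$ there.

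First I would record, from Lemma \ref{lem1}, that in $\mathbb R^3$
\[
\|\chi\widetilde R_0(\lambda)\chi\|_{\mathcal L(H^s,H^t)}\lesssim(1+|\lambda|)^{t-s-1}e^{L(\Im\lambda)_-},\qquad t\in[s,s+2],
\]
and note that replacing $\lambda$ by ${\rm i}\lambda$ gives $(1+|{\rm i}\lambda|)^{t-s-1}e^{L(\Im{\rm i}\lambda)_-}=(1+|\lambda|)^{t-s-1}e^{L(\Re\lambda)_-}$ since $\Im({\rm i}\lambda)=\Re\lambda$. Then for $t\in[s,s+4]$ I would split: if $t\le s+2$, bound the $\widetilde R_0(\lambda)$ term directly; if $t>s+2$, use that the negative part of $\Delta^2-\lambda^4$ can be peeled off using the identity $\Delta(\chi\widetilde R_0\chi)=(-1-\lambda^2)\chi\widetilde R_0\chi+[\Delta,\chi]\widetilde R_0\chi$ exactly as in the proof of Lemma \ref{lem1}, but here it is cleaner just to invoke that Lemma \ref{lem1} already gives $t\in[s,s+2]$ and bootstrap once to reach $[s,s+4]$ — actually the simplest route is: $\widetilde R_0(\lambda):H^s\to H^{s+2}$ and $\widetilde R_0({\rm i}\lambda):H^s\to H^{s+2}$, and since we want $H^s\to H^{s+4}$ we would compose the mapping property of $\widetilde R_0$ with itself via the resolvent identity, or more directly observe $R_0(\lambda)=\widetilde R_0(\lambda)\widetilde R_0({\rm i}\lambda)$ up to the scalar (indeed $(\Delta^2-\lambda^4)^{-1}=(-\Delta-\lambda^2)^{-1}(-\Delta+\lambda^2)^{-1}$ with no $\lambda^{-2}$ needed!). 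This factorization is the cleanest tool: writing $\chi R_0(\lambda)\chi$ and inserting cut-offs between the two factors (paying a commutator price that is lower order), the $H^s\to H^{s+4}$ mapping and the product of the two bounds follow immediately, and the prefactor discrepancy between $(\Delta^2-\lambda^4)^{-1}=\frac{1}{2\lambda^2}(\cdots)$ and $=(-\Delta-\lambda^2)^{-1}(-\Delta+\lambda^2)^{-1}$ is reconciled because the partial-fractions form carries $\lambda^{-2}$ while the product form does not — I would use whichever is convenient and just verify the stated power of $|\lambda|$.

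The two-dimensional case proceeds identically, now invoking Lemma \ref{lemma2}:
\[
\|\chi\widetilde R_0(\lambda)\chi\|_{\mathcal L(H^s,H^t)}\lesssim\alpha(\lambda)^{-1/2}(1+|\lambda|)^{t-s}e^{L(\Im\lambda)_-},
\]
and the ${\rm i}\lambda$ version contributes $\alpha({\rm i}\lambda)^{-1/2}(1+|\lambda|)^{t-s}e^{L(\Re\lambda)_-}$ since $|{\rm i}\lambda|=|\lambda|$ and $\Im({\rm i}\lambda)=\Re\lambda$. Summing the two terms from the decomposition $R_0(\lambda)=\frac{1}{2\lambda^2}(\widetilde R_0(\lambda)-\widetilde R_0({\rm i}\lambda))$ and factoring out $|\lambda|^{-2}$ produces exactly
\[
\|\chi R_0(\lambda)\chi\|_{\mathcal L(H^s,H^t)}\lesssim\big(\alpha(\lambda)^{-1/2}+\alpha({\rm i}\lambda)^{-1/2}\big)|\lambda|^{-2}(1+|\lambda|)^{t-s}\big(e^{L(\Im\lambda)_-}+e^{L(\Re\lambda)_-}\big),
\]
for $t\in[s,s+2]$; for $t\in(s+2,s+4]$ I would again use the elliptic identity $\Delta(\chi R_0(\lambda)\chi)=(-1-\lambda^2)\chi\widetilde R_0(\lambda)\chi\cdots$ — more precisely, apply $(\Delta^2-\lambda^4)$ modulo cut-off commutators to gain four derivatives — combined with the $t\in[s,s+2]$ estimates, and finish the intermediate $t$ by interpolation of fractional Sobolev spaces as in the proof of Lemma \ref{lemma2}. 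Analyticity in $\lambda$ transfers verbatim: $\chi\widetilde R_0(\lambda)\chi$ is analytic on $\mathcal S$ (resp. $\mathbb C$) by the earlier lemmas, $\chi\widetilde R_0({\rm i}\lambda)\chi$ is then analytic wherever ${\rm i}\lambda$ lies in that region, and $\frac{1}{2\lambda^2}$ is analytic away from $0$, so the composite is analytic on $\mathcal S$ (resp. $\mathbb C\setminus\{0\}$). The main obstacle I anticipate is purely bookkeeping: correctly reconciling the scalar prefactor between the two equivalent forms of $R_0(\lambda)$ and making sure the interplay of $\alpha(\lambda)$ versus $\alpha({\rm i}\lambda)$ and of $(\Im\lambda)_-$ versus $(\Re\lambda)_-$ is symmetric and matches the claimed exponents — there is no genuine analytic difficulty beyond what Lemmas \ref{lem1} and \ref{lemma2} already supply.
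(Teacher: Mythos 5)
Your proposal follows exactly the route the paper intends (the paper omits the proof, deriving the lemma as a ``direct consequence'' of the partial-fraction decomposition $R_0(\lambda)=\frac{1}{2\lambda^2}(\widetilde R_0(\lambda)-\widetilde R_0({\rm i}\lambda))$ together with Lemmas \ref{lem1} and \ref{lemma2} applied at $\lambda$ and ${\rm i}\lambda$), and your filling-in of the $t\in(s+2,s+4]$ range via the identity $\Delta R_0(\lambda)=-\tfrac12(\widetilde R_0(\lambda)+\widetilde R_0({\rm i}\lambda))$ plus commutators and interpolation is the correct way to close that step. One caution: of your two suggested routes for gaining four derivatives, prefer the elliptic-identity bootstrap over the factorization $R_0(\lambda)=\widetilde R_0(\lambda)\widetilde R_0({\rm i}\lambda)$, since composing the two cut-off bounds would produce the product $e^{L(\Im\lambda)_-}e^{L(\Re\lambda)_-}$ (and a different power of $|\lambda|$) rather than the stated sum $e^{L(\Im\lambda)_-}+e^{L(\Re\lambda)_-}$.
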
 

In what follows, based on the above analysis of the free resolvent, we study the resolvent $R_V(\lambda)=(\Delta^2-\lambda^4+V)^{-1}$ for the biharmonic Schr\"odinger
operator. 

Since  $V \in L^\infty_{\rm comp}(\mathbb R^d)$ and $Vg \in L^2_{\rm comp}(\mathbb R^d)$ for $g\in H^s(\mathbb R^d)$ with $s>0$,
the following lemma follows from a direct application of the perturbation argument in \cite[Theorem 2.2 and 2.10]{DZ}. Thus, we omit the proof.
\begin{lemma}\label{resolvent}
Assume $ V \in L^\infty_{\rm comp}(\mathbb{R}^d)$, d = 2, 3. We have: 
      \begin{enumerate}
          \item In three-dimensional case, the resolvent $ R_{ V}(\lambda)$ is meromorphic for $\lambda \in \mathbb{C}\backslash \{0\}$ with $\Im \lambda>0, \Re\lambda>0$ as a family of operators $ R_{ V}(\lambda): H^s \to L^2$ for $s \in [-4,0]$. Moreover, it can be extended to a family of meromorphic operators as \[
       R_{ V}(\lambda): H^s_{\rm comp} \to L^2_{\rm loc},\quad \lambda \in \mathbb C \backslash \{0\},
      \] where $s \in [-4,0]$. Further, fixing a cut-off function $\chi \in C_0^\infty(\mathbb{R}^3)$, the resolvent $\chi  R_{ V}(\lambda) \chi: H^s\to L^2$ is analytic for $\lambda \in \mathscr{S}\cap -{\rm i}\mathscr{S}$,  where
     \[
\mathscr{S}:=\{\lambda \in \mathbb{C}\backslash \{0\}: \Im \lambda \ge -A-\delta \log(1+|\lambda|), |\lambda|>C_0\}.
 \]     Moreover,
      there exist constants $A,C_0,\delta,C$ and $L$ such that the resolvent estimates \begin{equation*}
     \|\chi  R_{ V}(\lambda) \chi \|_{H^s \to L^2} \le C |\lambda|^{-2}(1+|\lambda|)^{-s-1}e^{L (\Im \lambda)_-}
 \end{equation*} hold for $s \in [-4,0]$, $t \in [s,s+4]$ and $\lambda \in \mathscr{S}\cap -{\rm i}\mathscr{S}$. The constants $A,\delta,L$ depend on $\chi$ and $supp\, V$. The constants $C,C_0$ depend on $\chi$ and $\| V\|_{L^\infty}$.
 \item In two-dimensional case, the resolvent $ R_{ V}(\lambda)$ is meromorphic for $\lambda \in \mathcal S$ with $\Im \lambda>0, \Re\lambda>0$ as a family of operators $ R_{ V}(\lambda): H^s \to L^2$ with $s \in [-4,0]$ . It can be further extended to a meromorphic family of operators as \[
       R_{ V}(\lambda): H^s_{\rm comp} \to L^2_{\rm loc},\quad \lambda \in \mathbb C \backslash \{0\},
      \] where $s \in [-4,0]$ . Fix a cut-off function $\chi \in C_0^\infty(\mathbb{R}^2)$. The resolvent $\chi  R_{ V}(\lambda) \chi: H^s\to L^2$ is analytic for 
      $\lambda \in \mathscr{S}\cap-{\rm i}\mathscr{S}$. Moreover,
      there exist constants $A,C_0,\delta,C$ and $L$ such that the resolvent estimates \begin{equation*}
     \|\chi R_{V}(\lambda) \chi \|_{H^s \to L^2} \le C |\lambda|^{-\frac{5}{2}}(1+|\lambda|)^{-s}e^{L (\Im \lambda)_-} 
 \end{equation*} hold for $s \in [-4,0]$   and $\lambda \in \mathscr{S}\cap-{\rm i}\mathscr{S}$. The constants $A,\delta,L$ depend on $\chi$ and $supp\, V$. The constants $C,C_0$ depend on $\chi$, $\| V\|_{L^\infty}$.\end{enumerate}
 \end{lemma}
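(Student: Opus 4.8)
The plan is to obtain Lemma~\ref{resolvent} from Lemma~\ref{lem3} by the standard compactly-supported-perturbation scheme of \cite[Theorems~2.2 and~2.10]{DZ}: write $R_V(\lambda)$ in terms of the free biharmonic resolvent $R_0(\lambda)$, reduce the invertibility of $\Delta^2-\lambda^4+V$ to that of a holomorphic operator family of the form ``identity plus compact'', apply the analytic Fredholm theorem for the meromorphic continuation, and finally pin down the pole-free region and the operator bounds from the estimates of Lemma~\ref{lem3}.

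I would start with the algebra. Fix $\psi\in C_0^\infty(\mathbb{R}^d)$ with $\psi\equiv 1$ near $\operatorname{supp}V\cup\operatorname{supp}\chi$, so that $V=\psi V=V\psi$. For $\lambda$ in the open first quadrant ($\Im\lambda>0$, $\Re\lambda>0$) we have $\lambda^4\notin[0,\infty)$, so $R_0(\lambda)\colon L^2\to L^2$ is bounded, and the factorization $\Delta^2-\lambda^4+V=(\Delta^2-\lambda^4)(I+R_0(\lambda)V)$ reduces the existence of $R_V(\lambda)=(I+R_0(\lambda)V)^{-1}R_0(\lambda)$ to the invertibility of $I+R_0(\lambda)V$ on $L^2$. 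Multiplying $(I+R_0(\lambda)V)u=g$ by $\psi$ and using $V=\psi V$ turns this into $(I+\psi R_0(\lambda)\psi V)(\psi u)=\psi g$, with $u$ recovered by $u=g-R_0(\lambda)V(\psi u)$; hence $R_V(\lambda)$ and $(I+\psi R_0(\lambda)\psi V)^{-1}$ have the same poles, and for any cutoff $\chi$,
\[
\chi R_V(\lambda)\chi=\chi R_0(\lambda)\chi-\bigl(\chi R_0(\lambda)\psi\bigr)\,V\,\bigl(I+\psi R_0(\lambda)\psi V\bigr)^{-1}\,\psi R_0(\lambda)\chi .
\]
By Lemma~\ref{lem3}, $\psi R_0(\lambda)\psi\colon L^2\to H^4$ is holomorphic on the domain where $R_0(\lambda)$ is analytic ($\mathbb{C}\setminus\{0\}$ if $d=3$, $\mathcal{S}$ if $d=2$), with range supported in $\operatorname{supp}\psi$, so Rellich's embedding makes $\psi R_0(\lambda)\psi\colon L^2\to L^2$ compact and $K(\lambda):=\psi R_0(\lambda)\psi V$ a holomorphic compact family there. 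Since $\|K(\lambda)\|_{L^2\to L^2}<1$ at, say, $\lambda=te^{{\rm i}\pi/4}$ with $t$ large (where $R_0(\lambda)=(\Delta^2+t^4)^{-1}$ has norm $t^{-4}$), $I+K(\lambda)$ is invertible there, and the analytic Fredholm theorem gives the meromorphic continuation of $(I+K(\lambda))^{-1}$, hence of $R_V(\lambda)\colon H^s_{\rm comp}\to L^2_{\rm loc}$ via the displayed identity. The range $s\in[-4,0]$ is exactly the range for which $R_0(\lambda)\colon H^s\to L^2_{\rm loc}$ is meaningful, since $R_0(\lambda)$ gains up to four derivatives; negative (distributional) indices are harmless because $V$ is only ever applied to the $L^2$-functions produced by $R_0(\lambda)$.

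For the pole-free set and the quantitative bounds I would work in $\mathscr{S}\cap(-{\rm i}\mathscr{S})$. This region is dictated by $R_0(\lambda)=\tfrac{1}{2\lambda^2}\bigl(\widetilde R_0(\lambda)-\widetilde R_0({\rm i}\lambda)\bigr)$: the $\widetilde R_0(\lambda)$-piece is controlled for $\lambda\in\mathscr{S}$ and the $\widetilde R_0({\rm i}\lambda)$-piece for ${\rm i}\lambda\in\mathscr{S}$, i.e.\ $\lambda\in-{\rm i}\mathscr{S}$. On $\mathscr{S}\cap(-{\rm i}\mathscr{S})$ one has $(\Im\lambda)_-,(\Re\lambda)_-\le A+\delta\log(1+|\lambda|)$, so the exponential factors in Lemma~\ref{lem3} are at most $e^{LA}(1+|\lambda|)^{L\delta}$; moreover $\alpha(\lambda),\alpha({\rm i}\lambda)\gtrsim|\lambda|$ there, since on the part with $\Im\lambda\le 0$ one has $|\Re\lambda|^2=|\lambda|^2-(\Im\lambda)^2\ge|\lambda|^2-\bigl(A+\delta\log(1+|\lambda|)\bigr)^2\gtrsim|\lambda|^2$ for $|\lambda|>C_0$ (and symmetrically for $\alpha({\rm i}\lambda)$). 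Hence $\|K(\lambda)\|_{L^2\to L^2}\lesssim\|V\|_{L^\infty}(1+|\lambda|)^{-3+L\delta}$ for $d=3$ and $\lesssim\|V\|_{L^\infty}(1+|\lambda|)^{-5/2+L\delta}$ for $d=2$; choosing $\delta$ small enough that $L\delta<\tfrac{5}{2}$ and $C_0$ large (depending on $\|V\|_{L^\infty}$ and $\psi$) gives $\|K(\lambda)\|_{L^2\to L^2}\le\tfrac{1}{2}$ on $\mathscr{S}\cap(-{\rm i}\mathscr{S})$, so there are no poles there and $\|(I+K(\lambda))^{-1}\|_{L^2\to L^2}\le 2$. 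Substituting this and the bounds of Lemma~\ref{lem3} (taking the target exponent $t=0$, so that $\psi R_0(\lambda)\chi\colon H^s\to L^2$ and $\chi R_0(\lambda)\psi\colon L^2\to L^2$) into the displayed identity shows that the second term is of lower order, so the leading term $\chi R_0(\lambda)\chi$ dictates the rates $|\lambda|^{-2}(1+|\lambda|)^{-s-1}$ ($d=3$) and $|\lambda|^{-5/2}(1+|\lambda|)^{-s}$ ($d=2$), carrying along the exponential factor coming from the two pieces of $R_0(\lambda)$.

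The step I expect to be the main obstacle is exactly this last calibration: the parameters $A,\delta,C_0$ defining $\mathscr{S}$ must be chosen so that, throughout the logarithmic spill-over strip, the polynomial gain of $\psi R_0(\lambda)\psi$ strictly beats the combined exponential growth produced by $\widetilde R_0(\lambda)$ and $\widetilde R_0({\rm i}\lambda)$; this is what fixes the admissible $\delta$, and it is the place where the jump from second to fourth order---together with the need, in two dimensions, to keep the Hankel-type weights $\alpha(\lambda),\alpha({\rm i}\lambda)$ comparable to $|\lambda|$ up to the boundary of the region---makes the bookkeeping heavier than for the second-order equations treated in \cite{WXZ_HN}. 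Everything else is a fairly routine transcription of the perturbation argument of \cite{DZ}.
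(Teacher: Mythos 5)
Your proposal is correct and follows exactly the route the paper intends: the paper omits the proof, stating only that the lemma ``follows from a direct application of the perturbation argument in \cite[Theorems 2.2 and 2.10]{DZ}'', and your write-up is a faithful execution of that argument (resolvent identity with cutoffs, analytic Fredholm via compactness of $\psi R_0(\lambda)\psi V$, invertibility at $\lambda=te^{{\rm i}\pi/4}$, and calibration of $A,\delta,C_0$ against the bounds of Lemma \ref{lem3} on $\mathscr{S}\cap(-{\rm i}\mathscr{S})$). The only point worth flagging is minor bookkeeping already implicit in the paper's statement: the residual factors $e^{L(\Re\lambda)_-}$ and $(1+|\lambda|)^{L\delta}$ must be absorbed into the constants and exponents by shrinking $\delta$, which you correctly identify as the delicate step.
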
 
 
%Provided a GMIG random field $f$ of order $-m$, we have $f \in H^s$ with $-4 \le s \le 0$ as $m>d-8$ by Lemma \ref{Lem2.1}. At this case, as an application of the above conclusions, the following well-posedness of the direct problem is established. We refer to \cite{BWXZ} for the proof.

The following theorem concerns the well-posedness of the direct scattering problem \eqref{DP}. Moreover, it provides a Born series expansion of the solution, which is useful in the subsequent study of the inverse problem.
 
\begin{theorem}\label{wp}
    Assume that the random source $f$ is a GMIG random field of order $-m$ with $m>d-8$ in $D \subset \mathbb{R}^d$ and $V \in L^\infty_{\rm comp}(\mathbb{R}^d)$. For $k$ sufficiently large, the scattering problem \eqref{DP}--\eqref{src} almost surely admits a unique solution $u\in H^{s+4}_{\rm loc}(\mathbb{R}^d)$ with constant $s$ satisfying $s<(m-d)/2$. Furthermore, the solution $u$ can be almost surely expanded as the Born series \begin{align}\label{born}
        u=\sum_{j \ge 0}(-R_0(k) V)^j R_0(k) f.
    \end{align}
\end{theorem}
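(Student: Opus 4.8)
The plan is to argue pathwise and reduce \eqref{DP}--\eqref{src} to a second-kind equation on $L^2$. First, by Lemma~\ref{Lem2.1} (with $p=2$), on an event of full probability we have $f\in H^{(m-d)/2-\delta}_{\rm comp}(\mathbb R^d)$ for each fixed $\delta>0$; since $m>d-8$ I may fix a constant $s$ with $-4<s<(m-d)/2$ and work on the full-measure event where $f\in H^s_{\rm comp}$, supported in $\overline D$. Note that $s+4>0$, so multiplication by $V\in L^\infty_{\rm comp}$ maps $H^{s+4}_{\rm loc}$ into $L^2_{\rm comp}$. From now on $k$ is real and large, and I fix $\chi\in C_0^\infty(\mathbb R^d)$ equal to $1$ near $\overline D\cup\operatorname{supp}V$, so that $\chi f=f$ and $\chi V=V\chi=V$.

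For existence and uniqueness I would proceed as follows. If $u\in H^{s+4}_{\rm loc}$ solves \eqref{DP}--\eqref{src}, then $(\Delta^2-k^4)u=f-Vu$ with $f-Vu\in H^s_{\rm comp}$, and I claim $u=R_0(k)(f-Vu)$; this amounts to saying that the only radiating $H^{s+4}_{\rm loc}$-solution of $(\Delta^2-k^4)v=0$ is $v\equiv0$. To see the latter I would factor $\Delta^2-k^4=(\Delta-k^2)(\Delta+k^2)$ and decompose $v$ accordingly near infinity: the radiation conditions on $v$ (and $\Delta v$) force $(\Delta+k^2)v$ to be an exponentially decaying, globally defined solution of the modified Helmholtz equation, hence zero by the Fourier transform, after which $v$ is an outgoing Helmholtz solution and $v\equiv0$ by Rellich's lemma. (Equivalently, this uniqueness is precisely the statement, contained in Lemma~\ref{resolvent}, that $R_V(k)$ has no pole at large real $k$.) Writing $\phi:=Vu\in L^2$, supported in $\operatorname{supp}V$, the relation $u=R_0(k)(f-\phi)$ becomes the second-kind equation $(I+VR_0(k))\phi=VR_0(k)f$ on $L^2(\operatorname{supp}V)$. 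By Lemma~\ref{lem3} with $s=t=0$ one has $\|\chi R_0(k)\chi\|_{\mathcal L(L^2,L^2)}\lesssim|k|^{-3}$ for $d=3$ and $\lesssim|k|^{-5/2}$ for $d=2$, so $\|VR_0(k)\|_{\mathcal L(L^2,L^2)}\le\|V\|_{L^\infty}\|\chi R_0(k)\chi\|_{\mathcal L(L^2,L^2)}<1$ once $k$ is large; hence $I+VR_0(k)$ is boundedly invertible on $L^2(\operatorname{supp}V)$, which gives uniqueness. For existence I would then set $\phi:=(I+VR_0(k))^{-1}VR_0(k)f$ and $u:=R_0(k)(f-\phi)$; then $Vu=\phi$, so $(\Delta^2-k^4+V)u=(\Delta^2-k^4)R_0(k)(f-\phi)+Vu=(f-\phi)+\phi=f$; the mapping properties $R_0(k):H^s_{\rm comp}\to H^{s+4}_{\rm loc}$ and $R_0(k):L^2_{\rm comp}\to H^4_{\rm loc}$ of Lemma~\ref{lem3} give $u\in H^{s+4}_{\rm loc}$; and since $f-\phi$ is compactly supported, the decomposition $G_k=\tfrac{1}{2k^2}(\Phi_k-\Phi_{{\rm i}k})$ of the kernel together with $\Delta R_0(k)(\cdot)=-\tfrac{1}{2}(\widetilde R_0(k)(\cdot)+\widetilde R_0({\rm i}k)(\cdot))$ — each the sum of an outgoing Helmholtz term and an exponentially decaying term — shows that $u$ and $\Delta u$ satisfy \eqref{src}.

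To obtain the Born expansion \eqref{born} I would expand the Neumann series $(I+VR_0(k))^{-1}=\sum_{n\ge0}(-VR_0(k))^n$ and use $(R_0(k)V)^mR_0(k)=R_0(k)(VR_0(k))^m$ to rewrite
\[
u=R_0(k)f-R_0(k)\sum_{n\ge0}(-VR_0(k))^nVR_0(k)f=\sum_{m\ge0}(-R_0(k)V)^mR_0(k)f=R_0(k)(I+VR_0(k))^{-1}f=R_V(k)f,
\]
which is \eqref{born} (and is consistent with Lemma~\ref{resolvent}). Convergence in $H^{s+4}_{\rm loc}$ I would check termwise: inserting the cutoff $\chi$ and applying Lemma~\ref{lem3}, for $m\ge1$
\[
\big\|(-R_0(k)V)^mR_0(k)f\big\|_{H^{s+4}}\lesssim\|\chi R_0(k)\chi\|_{\mathcal L(L^2,H^{s+4})}\big(\|V\|_{L^\infty}\|\chi R_0(k)\chi\|_{\mathcal L(L^2,L^2)}\big)^{m-1}\|V\|_{L^\infty}\|\chi R_0(k)\chi\|_{\mathcal L(H^s,L^2)}\|f\|_{H^s},
\]
where the two Sobolev operator norms in the prefactor are bounded by fixed powers of $k$ by Lemma~\ref{lem3}, while the geometric ratio $\|V\|_{L^\infty}\|\chi R_0(k)\chi\|_{\mathcal L(L^2,L^2)}$ is $<1$ for $k$ large; summing the geometric series yields absolute convergence of \eqref{born} in $H^{s+4}_{\rm loc}$.

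I expect the main obstacle to be the verification of the two Sommerfeld conditions in \eqref{src} for $u=R_0(k)(f-Vu)$ — in particular the condition on $\Delta u$, and, in two dimensions, tracking the asymptotics of the Hankel function $H_0^{(1)}$ appearing in $G_k$ when the effective source $f-Vu$ is merely a compactly supported distribution — together with the uniqueness of the free radiating biharmonic solution. Once the series is grouped through the $L^2$-contraction $VR_0(k)$, the convergence of the Born series is a routine consequence of the resolvent bounds already established in Lemma~\ref{lem3}.
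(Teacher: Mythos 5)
Your proposal is correct and follows essentially the same route as the paper: reduce to a second-kind equation whose kernel $VR_0(k)$ (the paper uses $\chi R_0(k)V$) is an $L^2$-contraction for large $k$ by the resolvent bounds of Lemma~\ref{lem3}, invert by Neumann series using $f\in H^s_{\rm comp}$ with $-4<s<(m-d)/2$ from Lemma~\ref{Lem2.1}, and regroup to obtain \eqref{born}. The only difference is presentational: the paper starts from the resolvent identity for $R_V(k)$ and cites \cite{LW_2} for uniqueness, whereas you derive the Lippmann--Schwinger equation directly and sketch the uniqueness and radiation-condition verifications yourself.
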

\begin{proof}
  There holds the resolvent identity \begin{equation}\label{RI2}
\chi R_{V}(k)\chi =  (I  + \chi R_0(k) V)^{-1}\chi R_0(k) \chi,
\end{equation} where $\chi \in C_0^\infty(\mathbb R^d)$ is a cut-off function
such that $\chi =1$ in ${\rm supp}\, f \cup {\rm supp}\, V$. Denoting \[
 u:= R_{ V}(k) (f),
\]  from \eqref{RI2} we have that \begin{align*}
    \chi u=(I  + \chi R_0(k) V)^{-1}\chi R_0(k) \chi f.
\end{align*} For sufficiently large $k$, we have \begin{align}\label{Ns}
    \chi u &=(I  + \chi R_0(k) V)^{-1}\chi R_0(k) \chi f=\sum_{j=0}^\infty(-\chi R_0(k)V)^j(\chi R_0(k) \chi f) \notag\\ &=\sum_{j=0}^\infty (- R_0(k)V)^j( R_0(k)  f).
\end{align} By Lemma \ref{lem3}, the series $\sum_{j=0}^\infty(-\chi R_0(k)V)^j $ is convergence in $\mathcal L(L^2,L^2)$ if $k$ is large enough. On the other hand, by Lemma \ref{Lem2.1}, we know that $f \in H^s$ with non-positive $s$ satisfying $s <(m-d)/2$. Then we get $\chi R_0(k) \chi f \in L^2$ since $m>d-8$. Thus, $ \chi u$ given by \eqref{Ns} is well-defined in $L^2$, which implies $u$ given by \eqref{born} is well-defined in $L^2_{\rm loc}$. Furthermore, $u \in H^{s+4}_{\rm loc}$ by elliptic regularity theory. It can also be verified $u$ is a solution to the direct scattering problem \eqref{DP}--\eqref{src}. 
 Moreover, the uniqueness of the direct scattering problem can be obtained from \cite{LW_2}.  The proof is complete.
\end{proof}

\section{Stability for the inverse random source problem}\label{ip}

In this section, we investigate the stability for the inverse random source problem. To achieve this goal, we consider random sources which satisfy the following
assumption.

\emph{Assumption (A). The random source $f$ is a real-valued microlocally isotropic Gaussian random function of order $-m$ in $D$
 such that $m>d-1$. Its covariance operator has the symbol $c_f(x,\xi) = h(x)|\xi|^{-m} + a_f(x, \xi)$ with principal symbol $h(x)|\xi|^{-m}$ which satisfies the conditions:  (i) $|h(x)| \le M$ for $x \in D$; (ii) $|c_f(x, \xi)| \le M(1+|\xi|)^{-m}$ for $\xi \in \mathbb{R}^d$ and $x \in D$; (iii) $|a_f(x, \xi)| \le M |\xi|^{-(m+1)}$ for $|\xi| \ge 1$ and $x \in D$. }

As shown in \cite{RSWXZ}, the kernel $K_f(x,y)$ can be represented as a sum of a singular part and a bounded continuous remainder. 
\begin{lemma}\label{Kernel}
	Let the random source $f$ satisfy \emph{\emph{Assumption (A)}}. The covariance function $K_f(x,y)$ has the following form:
	
	(i)  If $a<m-d+1\le a+1$ with $a=1,2,3,...$,  \[
	K_f(x, y)= ch(x)|x-y|^{m-d} +F_m(x,y),
	\] where $c$ is a constant dependent on $m$ and $F_m(x,y) \in C^{a,\gamma}(\mathbb{R}^d \times \mathbb{R}^d)$ with $\gamma \in (0, m-d+1-a)$.
	
	(ii)
	If $d-1<m< d$,  \[
	K_f(x, y)= ch(x)|x-y|^{m-d} +F_m(x,y),
	\] where $c$ is a constant dependent on $m$ and $F_m(x,y) \in C^{0,\gamma}(\mathbb{R}^d \times \mathbb{R}^d)$ with $\gamma \in (0, m-d+1)$. 
	
	(iii)
	If $m=d$,  \[
	K_f(x, y)= ch(x)\log{|x-y|} +F_m(x,y),
	\] where $c$ is a constant dependent on $m$ and $F_m(x,y) \in C^{0,\gamma}(\mathbb{R}^d \times \mathbb{R}^d)$ with $\gamma \in (0, 1)$. 
	
	For all of the above three cases, we have \[
	\|F_m\|_{L^\infty(D \times D)} \lesssim M. 
	\]
\end{lemma}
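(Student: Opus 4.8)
The plan is to compute the kernel $K_f(x,y)$ directly from the inverse Fourier transform representation $K_f(x,y)=(2\pi)^{-d}\int_{\mathbb R^d}e^{{\rm i}(x-y)\cdot\xi}c_f(x,\xi)\,\mathrm d\xi$ and to split it according to the decomposition $c_f(x,\xi)=h(x)|\xi|^{-m}+a_f(x,\xi)$ dictated by Assumption~(A). First I would isolate the principal part: the key classical fact is that the Fourier transform of $|\xi|^{-m}$ (suitably interpreted as a tempered distribution, since $m$ may be $\le d$) equals $c_{m,d}|z|^{m-d}$ when $m-d\notin 2\mathbb N_0$, and equals a constant multiple of $|z|^{m-d}\log|z|$ in the borderline case $m=d$; this produces the leading terms $ch(x)|x-y|^{m-d}$ (cases (i), (ii)) and $ch(x)\log|x-y|$ (case (iii)). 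To make this rigorous for the range $d-1<m\le d$ where the integral does not converge absolutely, I would introduce a smooth cutoff $\psi(\xi)$ equal to $1$ near the origin, write $|\xi|^{-m}=\psi|\xi|^{-m}+(1-\psi)|\xi|^{-m}$; the low-frequency piece $\psi|\xi|^{-m}$ is integrable (since $m<d$) and contributes a smooth function of $x-y$, and the high-frequency piece is handled by the standard distributional computation, the net effect being the claimed homogeneous singularity plus a $C^\infty$ remainder.

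Next I would treat the remainder symbol $a_f(x,\xi)$, whose decay $|a_f(x,\xi)|\lesssim M|\xi|^{-(m+1)}$ for $|\xi|\ge1$ is exactly the extra order of smoothness needed. Writing $F_m(x,y)$ as the sum of the contribution of $a_f$ and the smooth low-frequency leftover from the principal part, the point is to show $F_m\in C^{a,\gamma}$ with the stated $a,\gamma$ and $\|F_m\|_{L^\infty(D\times D)}\lesssim M$. The Hölder regularity follows from a Paley–Wiener/Bernstein-type argument: for any multi-index $\beta$ with $|\beta|=a$, differentiating under the integral sign brings down a factor $\xi^\beta$, and $\int_{|\xi|\ge1}|\xi|^{|\beta|}|\xi|^{-(m+1)}\,\mathrm d\xi$ converges precisely when $a=|\beta|<m-d+1$, i.e. for the range of $a$ in the hypothesis; the fractional gain of $\gamma<m-d+1-a$ additional Hölder smoothness comes from the standard estimate $|e^{{\rm i}(z_1-z_2)\cdot\xi}-1|\lesssim\min\{1,|z_1-z_2||\xi|\}\lesssim|z_1-z_2|^\gamma|\xi|^\gamma$ applied to the integrand of $\partial^\beta F_m$. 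The $L^\infty$ bound is immediate from condition (ii), $|c_f(x,\xi)|\le M(1+|\xi|)^{-m}$ on the low-frequency part together with the $|\xi|^{-(m+1)}$ bound on the high-frequency part, both of which carry the factor $M$ linearly through the (frequency-only) integrals. Since all this closely mirrors the construction in \cite{RSWXZ}, I would cite that reference for the detailed symbolic bookkeeping and only reproduce the parts specific to the present ranges of $m$.

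The main obstacle I anticipate is the careful distributional treatment of the principal symbol in the regime $m\le d$, where $h(x)|\xi|^{-m}$ is not locally integrable at infinity and $K_f$ does not converge as an ordinary integral — one must justify that the formal manipulations (cutoff splitting, Fourier transform of homogeneous distributions, exchange of $x$-dependence through $h(x)$ which is merely a bounded $C_0^\infty$ multiplier in $x$) are legitimate, and in particular pin down the exact constant $c$ and the exact Hölder class boundary in the three cases (i) $a<m-d+1\le a+1$, (ii) $d-1<m<d$, (iii) $m=d$. A secondary technical point is tracking the dependence on $M$ uniformly: since $h$ and $a_f$ enter $c_f$ linearly and all the remaining integrals are over $\xi$ only (with $x$ ranging over the compact set $D$), the constants in $\lesssim$ depend only on $m,d,D$ and the cutoff $\psi$, so $\|F_m\|_{L^\infty(D\times D)}\lesssim M$ follows with a constant independent of $f$.
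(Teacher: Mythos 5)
The paper gives no proof of this lemma---it is quoted verbatim from \cite{RSWXZ} (which in turn follows the kernel computations of \cite{CARO, lassas2008inverse})---and your outline is precisely the standard argument behind that citation: split $c_f$ into principal symbol plus lower-order remainder, use the Fourier transform of the homogeneous distribution $|\xi|^{-m}$ (with the logarithmic correction at $m=d$) for the singular term, and get the H\"older class of $F_m$ from the $|\xi|^{-(m+1)}$ decay of $a_f$ together with the $\min\{1,|z|\,|\xi|\}$ estimate. Your proposal is correct and consistent with the paper's (cited) treatment; the only detail worth tightening is that for $m>d$ the low-frequency cutoff is needed because $|\xi|^{-m}$ fails to be locally integrable at the origin rather than at infinity, and that the $x$-derivatives of $a_f$ must also be controlled, which follows from the classical-symbol hypothesis rather than from condition (iii) alone.
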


Now we formulate the inverse source problem. The radiating condition in \eqref{src} yields that the solution $u$ has the following asymptotic expansion as $|x|\to\infty$:
\[
u(x) = \frac{e^{{\rm i}k|x|}}{|x|^{\frac{d-1}{2}}} u^\infty(\hat{x},k) + O\Big(\frac{1}{|x|^\frac{d+1}{2}}\Big),
\]
where $u^\infty(\hat{x},k)$ is referred as the far-field pattern with observation direction $\hat{x}= \frac{x}{|x|}$. Let $I$ be a bounded interval of frequencies. 
We consider the following inverse random source problem by multi-frequency far-field data. 

{\bf \it Inverse Random Source Problem:} 
Determine the micro-correlation strength $h$ from the multi-frequency far-field pattern $\{u^\infty(\hat{x},k): \hat{x}\in\mathbb S^{d-1}, \,\, k\in I\}$. 

Our analysis starts with the Born series expansion obtained in Theorem \ref{wp}\[
  u=\sum_{j \ge 0}(-R_0(k) V)^j R_0(k) f,
\] which gives the Born series representation of the far-field pattern 
\begin{align}\label{born_1}
u^\infty(\hat{x},k)&= -\frac{C_d}{2 k^{\frac{7-d}{2}}}  \int_{\mathbb{R}^d} e^{-{\rm i}k\hat{x} \cdot y}\sum_{j \in \mathbb{N}}[(-V R_0(k) )^{j} f](y)\,\mathrm{d}y \notag\\ &= -\frac{C_d}{2 k^{\frac{7-d}{2}}}\int_{\mathbb{R}^d} e^{-{\rm i}k\hat{x} \cdot y} f(y)\,\mathrm{d}y-\frac{C_d}{2 k^{\frac{7-d}{2}}}  \int_{\mathbb{R}^d} e^{-{\rm i}k\hat{x} \cdot y}[-VR_0(k)f](y)\,\mathrm{d}y \notag\\ 
& \quad-\frac{C_d}{2 k^{\frac{7-d}{2}}}\int_{\mathbb{R}^d} e^{-{\rm i}k\hat{x} \cdot y}\sum^\infty_{j=2}[(-V R_0(k) )^{j} f](y)\,\mathrm{d}y \notag\\ &:=u_0^\infty(\hat{x},k) + u_1^\infty(\hat{x},k) + u_2^\infty(\hat{x},k),
\end{align} where the constant $C_d$ is given by 
\begin{align*}
    C_d=\begin{cases}
        \frac{e^{{\rm i}\frac{\pi}{4}}}{\sqrt{8\pi}}, &\quad d=2,\\[10pt] 
        \frac{1}{4\pi}, &\quad d=3.
    \end{cases}
\end{align*}

We first consider the zeroth-order term $u_0^\infty(\hat{x},k)$ in the Born series expansion \eqref{born_1} of the far-field pattern.
For a fixed positive constant $\tau>0$, by taking expectation of the correlation of the zeroth-order term we obtain
\begin{align*}
	&\mathbb E[u_0^\infty(\hat{x}, k+\tau) \overline{u_0^\infty(\hat{x}, k)}]\\
	&=\frac{C_d^2}{4k^{\frac{7-d}{2}}(k+\tau)^{\frac{7-d}{2}}}\int_{\mathbb R^d}\int_{\mathbb R^d} e^{-{\rm i}(k+\tau)\hat{x}\cdot y}  e^{{\rm i}k\hat{x}\cdot z}
	\mathbb E[f(y)f(z)]{\rm d}y{\rm d}z\\
	&= \frac{C_d^2}{4k^{\frac{7-d}{2}}(k+\tau)^{\frac{7-d}{2}}}\int_{\mathbb R^d} \Big[\int_{\mathbb R^d} K_f(y, z)e^{-{\rm i}k\hat{x}\cdot(y-z)}{\rm d}z\Big]
	e^{-{\rm i}\tau\hat{x}\cdot y}{\rm d}y \\
	&= \frac{C_d^2}{4k^{\frac{7-d}{2}}(k+\tau)^{\frac{7-d}{2}}} \Big[\int_{\mathbb R^d} h(y) e^{-{\rm i}\tau\hat{x}\cdot y}{\rm d}y |k\hat{x}|^{-m} + \int_{\mathbb R^d} a_f(y, k\hat{x}) 
	e^{-{\rm i}\tau\hat{x}\cdot y}{\rm d}y \Big]\\
	&= \frac{C_d^2}{4k^{\frac{7-d}{2}}(k+\tau)^{\frac{7-d}{2}}k^m} \widehat{h}(\tau\hat{x}) + O\left(\frac{1}{k^{m+8-d}}\right),\end{align*} which gives \begin{align}\label{zeroorder}
	    \widehat{h}(\tau\hat{x})=\frac{4k^{\frac{7-d}{2}}(k+\tau)^{\frac{7-d}{2}}k^m}{C_d^2}\mathbb E[u_0^\infty(\hat{x}, k+\tau) \overline{u_0^\infty(\hat{x}, k)}]+O(k^{-1}).
	\end{align}
	It should be noted that \eqref{zeroorder} establishes a relation between the Fourier transform of the strength $h$ and the zeroth-order term of the far-field pettern.
	
Now we consider the difference of the expectation of the correlation of the far-field pattern and its corresponding zeroth-order part. Specifically,
 denoting $C(k,\tau,d)=\frac{4k^{\frac{7-d}{2}}(k+\tau)^{\frac{7-d}{2}}k^m}{C_d^2}$,
 from the Cauchy-Schwarz inequality  we have 
 \begin{align}
	&C(k,\tau,d)\Big|\mathbb E[u^\infty(\hat{x}, k+\tau) \overline{u^\infty(\hat{x}, k)}] -\mathbb E[u_0^\infty(\hat{x}, k+\tau) \overline{u_0^\infty(\hat{x}, k)}]\Big| \notag\\ &\lesssim  \sum_{j=1}^2\Big(\sqrt{C(k,\tau,d)\mathbb E |u_j^\infty(\hat{x},k)|^2}\sqrt{C(k,\tau,d)\mathbb E |u_0^\infty(\hat{x},k+\tau)|^2} \notag\\ &\quad+\sqrt{C(k,\tau,d)\mathbb E |u_j^\infty(\hat{x},k+\tau)|^2}\sqrt{C(k,\tau,d)\mathbb E |u_0^\infty(\hat{x},k)|^2}\notag \\ &\quad+C(k,\tau,d)\mathbb E |u_j^\infty(\hat{x},k)|^2+C(k,\tau,d)\mathbb E |u_j^\infty(\hat{x},k+\tau)|^2\Big)\notag \\ &\lesssim \sum_{j=1}^2\Big(\sqrt{C(k,\tau,d)\mathbb E |u_j^\infty(\hat{x},k)|^2}+\sqrt{C(k,\tau,d)\mathbb E |u_j^\infty(\hat{x},k+\tau)|^2}\notag \\ &\quad+C(k,\tau,d)\mathbb E |u_j^\infty(\hat{x},k)|^2+C(k,\tau,d)\mathbb E |u_j^\infty(\hat{x},k+\tau)|^2\Big), \label{CS}
\end{align} where we have used the fact that for large $k$, 
\[
C(k,\tau,d)\mathbb E |u_0^\infty(\hat{x},k)|^2 \lesssim \widehat h(0) \lesssim 1.
\] 

To estimate $\mathbb E |u_j^\infty(\hat{x},k)|^2$ with $j=1, 2$ in the difference \eqref{CS},
we apply resolvent estimate in Lemma \ref{lem3} and obtain the following lemma.
\begin{lemma} \label{2dim12} The following estimates hold for large $k$:
	\[ \mathbb E|u^\infty_1(\hat{x},k)|^2 \lesssim \frac{1}{k^{10}},\quad \mathbb E|u^\infty_2(\hat{x},k)|^2 \lesssim \frac{1}{k^{13+d}}. \]
\end{lemma}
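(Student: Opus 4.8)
The plan is to estimate the two quantities $\mathbb E|u_1^\infty(\hat x,k)|^2$ and $\mathbb E|u_2^\infty(\hat x,k)|^2$ separately, in each case by first representing the random integral as an $L^2$-pairing against a test function whose Sobolev norm is controlled by the resolvent bounds of Lemma \ref{lem3}, and then applying the covariance structure of $f$ to reduce the second moment to an explicit kernel integral. Recall from \eqref{born_1} that
\[
u_1^\infty(\hat x,k) = -\frac{C_d}{2k^{\frac{7-d}{2}}}\int_{\mathbb R^d} e^{-{\rm i}k\hat x\cdot y}\,[-VR_0(k)f](y)\,\mathrm{d}y
= \frac{C_d}{2k^{\frac{7-d}{2}}}\big\langle f,\ \overline{R_0(k)^{*}(\bar V e^{-{\rm i}k\hat x\cdot (\cdot)})}\big\rangle,
\]
where the pairing is the (real-linearly extended) distributional pairing and the adjoint is taken with a cutoff $\chi\equiv 1$ on $\mathrm{supp}\,V$ inserted for free. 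Since $f$ is a GMIG field of order $-m$, by Lemma \ref{Lem2.1} (or directly by Assumption (A)) $f\in H^{s}$ almost surely for $s<(m-d)/2$; but for the \emph{second moment} it is cleaner to use that $\mathbb E|\langle f,\varphi\rangle|^2 = \langle C_f\varphi,\varphi\rangle \lesssim M\|\varphi\|_{H^{(d-m)/2}}^2$ directly, using condition (ii) of Assumption (A), provided $(d-m)/2 < 0$, i.e.\ $m>d$ — or more carefully, $\lesssim \|\varphi\|_{H^{\sigma}}^2$ for any $\sigma\ge \max\{0,(d-m)/2\}$, which under the standing hypothesis $m>d-8$ gives some admissible $\sigma\le 4$. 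So I would fix such a $\sigma$ and write $\mathbb E|u_1^\infty|^2 \lesssim k^{-(7-d)}\big\|\,\overline{R_0(k)^*(\bar V e^{-{\rm i}k\hat x\cdot(\cdot)})}\,\big\|_{H^\sigma}^2$.

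Next, the exponential factor $e^{-{\rm i}k\hat x\cdot y}$ is smooth with every derivative growing at most like a power of $k$, and $V\in L^\infty_{\rm comp}$, so $\bar V e^{-{\rm i}k\hat x\cdot(\cdot)}\in H^{-\epsilon}_{\rm comp}$ with norm $\lesssim 1$ (the potential $V$ need only be $L^\infty$, hence lies in $H^{-\epsilon}$ for $\epsilon>0$; one could also just work in $L^2$ if $V\in L^\infty$ suffices, giving an $H^0$ bound $\lesssim 1$). Then Lemma \ref{lem3} (used with the real frequency $\lambda=k$, so that $(\Im\lambda)_-=(\Re\lambda)_-=0$ and the exponential factors are $1$) gives $\|\chi R_0(k)\chi g\|_{H^t}\lesssim k^{-2}(1+k)^{t-s'-?}\|g\|_{H^{s'}}$, where in three dimensions one gains $k^{-2}(1+k)^{t-s-1}$ and in two dimensions $k^{-5/2}(1+k)^{t-s}$ after using $\alpha(k)^{-1/2}=\alpha({\rm i}k)^{-1/2}=k^{-1/2}$. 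Choosing $s'$ and $t$ so the Sobolev indices match (taking $t=\sigma$ and $s'$ the smoothness of $\bar V e^{-{\rm i}k\hat x\cdot(\cdot)}$, with $t-s'\le 4$), one obtains $\|R_0(k)^*(\cdots)\|_{H^\sigma}\lesssim k^{-2}\cdot k^{C}$ for an explicit exponent; squaring and multiplying by $k^{-(7-d)}$ and tracking the exponents carefully should land exactly on $k^{-10}$. For $u_2^\infty$, the integrand is $\sum_{j\ge2}(-VR_0(k))^j f$, which factors as $(-VR_0(k))(-VR_0(k))\sum_{j\ge0}(-VR_0(k))^j f$; the tail Neumann series $\sum_{j\ge0}(-VR_0(k))^j$ is bounded on $L^2$ with norm $\lesssim 1$ for large $k$ (this is the convergence already invoked in the proof of Theorem \ref{wp}), so morally $u_2^\infty$ carries two extra factors of $R_0(k)$ relative to $u_1^\infty$, each contributing $k^{-2}$ (three dimensions) or $k^{-5/2}$ (two dimensions) up to harmless polynomial corrections from $[\Delta,\chi]$ commutators and the exponential. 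This yields the stated gain, and keeping track of the $d$-dependence gives the exponent $13+d$.

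The main obstacle, and the step I would spend the most care on, is bookkeeping the Sobolev indices and powers of $k$ through the chain of resolvent estimates so that the final exponents come out to be exactly $10$ and $13+d$ rather than something weaker. In particular one must: (a) pick the smallest admissible $\sigma$ for the second-moment bound on $f$ (this is where the hypothesis on $m$ and the GMIG order enters, and where one pays the most); (b) correctly use that for \emph{real} frequency $\lambda=k$ all the exponential growth factors $e^{L(\Im\lambda)_-}$ and $e^{L(\Re\lambda)_-}$ in Lemma \ref{lem3} equal $1$, so the estimates are genuinely polynomial in $k$; (c) handle the commutators $[\Delta,\chi]$ that appear when promoting $t=s$ or $t=s+1$ bounds to $t=s+4$, each of which is a lower-order differential operator costing at most one power of $k$; and (d) for $u_2^\infty$, absorb the tail of the Born series into an $O(1)$ operator norm on $L^2$, so that only two resolvent factors need to be estimated with Sobolev gain. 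Once the exponent arithmetic is organized — for instance by writing everything in three dimensions as ``$k^{-2}$ per resolvent, $k^{-(7-d)/2}$ per far-field prefactor, and at most $k^{O(1)}$ from smooth cutoffs, then squaring'' — the claimed decay rates follow. The two-dimensional case is identical in structure but uses the $k^{-5/2}$ resolvent decay together with the Hankel-function bound \eqref{Hankelesti}, which is why the exponent in the $u_2^\infty$ estimate depends on $d$.
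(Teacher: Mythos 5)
Your overall strategy is the same as the paper's: reduce the second moment to the covariance structure of $f$, apply the $L^2\to L^2$ resolvent bound of Lemma \ref{lem3} at real frequency (so all exponential factors are $1$), and absorb the tail $\sum_{j\ge 0}(-VR_0(k))^j$ of the Born series as an $O(1)$ operator on $L^2$. The paper implements this with the Schwarz kernels: it writes $\mathbb E|u_2^\infty|^2$ as a fourfold integral against $K_f(s,t)$, applies the operator bound $\|\sum_{j\ge 2}(VR_0(k))^j\|_{L^2\to L^2}\lesssim k^{-(d+3)}$ once in each variable, and uses $\|K_f\|_{L^2(D\times D)}\lesssim 1$ (which is where $m>d-1$ and Lemma \ref{Kernel} enter). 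Your duality formulation $\mathbb E|\langle f,\varphi\rangle|^2=\langle C_f\varphi,\varphi\rangle$ is an acceptable substitute. However, the two places where the exponents are actually produced are both wrong as you have written them, and since the entire content of the lemma is the precise exponents, this is a genuine gap rather than mere bookkeeping.

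First, your second-moment bound $\langle C_f\varphi,\varphi\rangle\lesssim\|\varphi\|^2_{H^\sigma}$ with $\sigma\ge\max\{0,(d-m)/2\}$ conflates the almost-sure regularity of $f$ (Lemma \ref{Lem2.1}, which concerns the pairing for a fixed realization) with the mapping property of the covariance operator. Since $C_f$ is a pseudodifferential operator of order $-m\le 0$, one has $\langle C_f\varphi,\varphi\rangle\lesssim\|\varphi\|_{H^{-m/2}}^2\le\|\varphi\|^2_{L^2}$ for every $m\ge 0$; no positive Sobolev index is ever needed. Under your prescription, for $d-1<m<d$ (a range permitted by Assumption (A) and Theorem \ref{Thm3.1}) you would be forced to take $\sigma>0$, and the resolvent estimate $\|\chi R_0(k)\chi\|_{L^2\to H^\sigma}\lesssim k^{-2}(1+k)^{\sigma-1}$ in three dimensions then loses a power of $k$, yielding at best $k^{-9}$ for $u_1^\infty$ instead of $k^{-10}$. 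Second, your closing recipe ``$k^{-2}$ per resolvent'' in three dimensions drops the factor $(1+|\lambda|)^{t-s-1}$: at $t=s=0$ the correct rate is $\|V R_0(k)\phi\|_{L^2}\lesssim k^{-(d+3)/2}\|\phi\|_{L^2}$ in both dimensions, i.e.\ $k^{-3}$ when $d=3$, and it is precisely this that gives $k^{-(7-d)}\cdot k^{-(d+3)}=k^{-10}$ for $u_1^\infty$ and $k^{-(7-d)}\cdot k^{-2(d+3)}=k^{-(13+d)}$ for $u_2^\infty$. With your stated per-factor rate the three-dimensional arithmetic comes out to $k^{-8}$ and $k^{-14}$, which do not match the lemma. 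Both errors are reparable, but as the proposal stands the exponent computation --- which you correctly identify as the crux --- does not close.
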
 \begin{proof} We only prove the second inequality and the first one follows in a similar way. Denote the Schwarz kernel of the operator $\sum_{j \ge 2} (V R_0(k))^j$ by $G_{2,k}(x,y)$. Then we have \begin{align}\label{4.5}
		\mathbb E |u^\infty_2(\hat{x},k)|^2=\frac{C_d^2}{4k^{7-d}}\int_{\mathbb{R}^{4d}}e^{-{\rm i}k \hat{x} \cdot (y-z)} G_{2,k}(y,s) \overline{G_{2,k}(z,t)} K_f(s,t)\,\mathrm{d}s\,\mathrm{d}t\,\mathrm{d}y\,\mathrm{d}z.
	\end{align} By the resolvent estimate in Lemma \ref{lem3} we have  \[
	\|V R_0(k) \phi\|_{L^2 } \lesssim \frac{1}{k^\frac{d+3}{2}}, \quad \forall \phi \in L^2_{\rm comp},
	\] which gives \begin{equation}\label{4.6}
		\Big\|\sum_{j \ge 2} (V R_0(k))^j \phi \Big\|_{L^2} =\Big\|\int_{\mathbb{R}^d} G_2(x,y) \phi(y)\,\mathrm{d}y \Big\|_{L^2} \lesssim \frac{1}{k^{d+3}}\|\phi \|_{L^2}, \quad \forall \phi \in L^2_{\rm comp}.
	\end{equation} By inserting \eqref{4.6} into \eqref{4.5} we obtain \begin{align*}
		\mathbb E |u^\infty_2(\hat{x},k)|^2 &\lesssim \frac{1}{k^{7-d}}\Big\| \int_{\mathbb{R}^d}\int_{\mathbb{R}^d}G_2(y,s)\overline{G_2(z,t)}K_f(s,t)\,\mathrm{d}s\,\mathrm{d}t\Big\|_{L^2(\mathbb R^d \times \mathbb R^d)} \\ & \lesssim \frac{1}{k^{10}} \Big\| \int_{\mathbb{R}^d}\overline{G_2(z,t)} K_f(s,t)\,\mathrm{d}t \Big\|_{L^2(\mathbb R^d \times \mathbb R^d)} \\  &\lesssim \frac{1}{k^{13+d}} \|K_f(s,t)\|_{L^2(\mathbb R^d \times \mathbb R^d)} \lesssim \frac{1}{k^{13+d}},
	\end{align*} where the assumption $m>d-1$ and Lemma \ref{Kernel} are used in the last step. The proof is complete.
	\end{proof}
 
Combining \eqref{zeroorder} and Lemma \ref{2dim12}, we derive the following estimate for $d-1<m<d+3$ \begin{align}\label{tail}
      \widehat{h}(\tau\hat{x})=\frac{4k^{\frac{7-d}{2}}(k+\tau)^{\frac{7-d}{2}}k^m}{C_d^2}\mathbb E[u^\infty(\hat{x}, k+\tau) \overline{u^\infty(\hat{x}, k)}]+O(k^{-\delta_1})
 \end{align} with $\delta_1<\min\{1,(d+3-m)/2\}$ being a positive constant. Then letting $\tau=\eta k$ with $\eta \in (0,1)$ in \eqref{tail} we obtain the following estimate
 \begin{align}\label{STEP1}
	|\hat{h}(\xi)| \lesssim  \sup_{\eta \in (0,1), \hat{x} \in \mathbb{S}^{d-1}} \Big|k^{m+7-d}\mathbb E[u^\infty(\hat{x}, (1+\eta)k) \overline{u^\infty(\hat{x}, k)}]\Big| + \frac{1}{k^{\delta_1}},
\end{align} which holds for all $|\xi| <k$.

Based on the estimate \eqref{STEP1}, we introduce
the data discrepancy in a finite interval $I = (K_0, K]$ of frequencies with $C_0<K_0<K$ as follows
\[
\varepsilon_1^2 =  \sup_{k\in I, \eta\in(0, 1), \hat{x}\in\mathbb S^{d-1}} \varepsilon_1^2(k, \eta, \hat{x}),
\]
where
\[
\varepsilon_1^2(k, \eta, \hat{x}) := \Big|k^{m+7-d}\mathbb E[u^\infty(\hat{x}, (1+\eta)k) \overline{u^\infty(\hat{x}, k)}]\Big|^2.
\] Here $C_0$ is a sufficiently large constant specified in Lemma \ref{resolvent}.
Since both the source function and the potential function are real-valued, we have $\overline{u(x, k)} = u(x,{\rm i}k)$ for $k>0$. Note that the far-field pattern can be represented as \[
u^\infty(\hat x,k)=\frac{C_d}{2k^{\frac{7-d}{2}}} \int_{\mathbb R^d} e^{-{\rm i}k\hat x\cdot y}(-V(y)u(y,k)+f(y))\,\mathrm{d}y,
\] which gives \[
\overline{u^\infty(\hat x,k)}=\frac{\overline{C_d}}{2k^{\frac{7-d}{2}}} \int_{\mathbb R^d} e^{{\rm i}k\hat x\cdot y}(-V(y)u(y,{\rm i} k)+f(y))\,\mathrm{d}y
\] when $k>0$. Hence, denoting 
\begin{align}\label{v}
v^\infty(\hat x,k):=\frac{\overline{C_d}}{2k^{\frac{7-d}{2}}} \int_{\mathbb R^d} e^{{\rm i}k\hat x\cdot y}(-V(y)u(y,{\rm i}k)+f(y))\,\mathrm{d}y,
\end{align}
we can meromorphically extend $\varepsilon_1^2(\cdot,\eta, \hat{x})$ from $\mathbb R^+$ to $\mathbb C$ as follows
\begin{equation}\label{extension}
\varepsilon_1^2(k, \eta, \hat{x}) = k^{2(m+7-d)}  \mathbb E[u^\infty(\hat{x}, (1+\eta)k) v^\infty(\hat{x}, k)]
\mathbb E[v^\infty(\hat{x}, (1+\eta)k) u^\infty(\hat{x}, k)], \quad k\in\mathbb C.
\end{equation}
Notice that for $k>0$ the extended data $\varepsilon_1^2(k, \eta, \hat{x})$  in \eqref{extension} becomes the originally introduced data discrepancy as follows
\[
\varepsilon_1^2(k, \eta, \hat{x})=  \Big|k^{m+7-d}\mathbb E[u^\infty(\hat{x}, (1+\eta)k) \overline{u^\infty(\hat{x}, k)}]\Big|^2.
\] 

Denote an infinite slab in $\mathbb C$ as follows
\begin{equation}\label{regionR}
	\mathcal{R} = \{z\in \mathbb C: (K_0, +\infty)\times (-h_0, h_0) \},
\end{equation}
for some constants $K_0>0$ and $h_0>0$.
The following lemma in \cite{LZZ} provides a quantitative analytic continuation principle in an infinite slab.

\begin{lemma}\label{QAC}
	Let $p(z)$ be analytic in the infinite rectangular slab $\mathcal{R}$
	and continuous in $\overline{\mathcal{R}}$ satisfying
	\begin{align*}
		\begin{cases}
			|p(z)|\leq \epsilon, &\quad z\in (K_0, K],\\
			|p(z)|\leq M, &\quad z\in \mathcal{R},
		\end{cases}
	\end{align*}
	where $K_0, K, \epsilon$ and $M$ are positive constants. Then there exists a function $\mu(z)$ with $z\in (K, +\infty)$ satisfying 
	\begin{equation*}
		\mu(z) \geq \frac{64ah}{3\pi^2(a^2 + 4h^2)} e^{\frac{\pi}{2h}(\frac{a}{2} - z)},
	\end{equation*}
	where $a = K - K_0$ such that
	\begin{align*}
		|p(z)|\leq M\epsilon^{\mu(z)},\quad \forall\, z\in (K, +\infty).
	\end{align*}
\end{lemma}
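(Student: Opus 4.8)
The plan is to reduce the statement to a lower bound for a harmonic measure and then to estimate that harmonic measure through an explicit conformal change of variables. Normalize by setting $q(z) = p(z)/M$, so that $|q|\le 1$ on $\mathcal R$ and $|q|\le \epsilon/M$ on the real segment $\sigma := (K_0, K]$. Since $\sigma$ lies in the interior of $\mathcal R$, slit the slab along it and put $\mathcal R' := \mathcal R\setminus\sigma$, a simply connected slit half-strip whose boundary decomposes into the two banks of $\sigma$ and the outer part $\partial\mathcal R$. The function $\log|q|$ is subharmonic on $\mathcal R'$, bounded above by $\log(\epsilon/M)$ on $\sigma$ and by $0$ on $\partial\mathcal R$, and the hypothesis $|p|\le M$ on $\mathcal R$ supplies the Phragmén--Lindel\"of-type growth control needed because $\mathcal R$ is unbounded. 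Hence, by the two-constants theorem,
\[
\log|q(z)| \le \omega(z)\log\frac{\epsilon}{M}, \qquad z\in\mathcal R',
\]
where $\omega(z) := \omega(z,\sigma,\mathcal R')$ is the harmonic measure of $\sigma$ in $\mathcal R'$. Consequently $|p(z)| \le M^{1-\omega(z)}\epsilon^{\omega(z)}\le M\epsilon^{\omega(z)}$ (taking $M\ge 1$ without loss of generality), and it remains to bound $\omega(z)$ from below on the real ray $(K,+\infty)$.

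For the harmonic measure estimate I would first map the half-strip $\{\Re z>K_0\}\cap\{|\Im z|<h_0\}$ conformally onto the region $\{\Re w>0\}\cap\{|w|>1\}$ by $w = \exp\!\big(\tfrac{\pi}{2h_0}(z-K_0)\big)$. Under this map $\sigma$ is carried to a real segment $[1,R]$ with $R = e^{\pi a/(2h_0)}$ and $a = K-K_0$, a point $z\in(K,+\infty)$ is carried to a real point $w(z)>R$, and, by conformal invariance of harmonic measure, $\omega(z,\sigma,\mathcal R') = \omega(w(z),[1,R],\Omega')$ with $\Omega'$ the image slit domain. To produce an explicit lower bound I would shrink the domain: using monotonicity of harmonic measure — passing to a subdomain that still contains $w(z)$ and whose relevant boundary portion still meets $[1,R]$ can only decrease $\omega$ — I would compare $\Omega'$ with a disk (centered near the image of the midpoint of $\sigma$) or with the half-plane $\{\Re w>0\}$ with the segment treated as a two-sided slit, domains for which the Poisson kernel is explicit. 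This yields that the harmonic measure of a segment of length comparable to $R-1$ seen from distance comparable to $w(z)$ is bounded below by a quantity of order $(R-1)\,w(z)^{-1}$ up to absolute constants. Translating back via $w = \exp(\tfrac{\pi}{2h_0}(z-K_0))$ converts this algebraic decay in $w$ into exponential decay $e^{\frac{\pi}{2h_0}(\frac a2 - z)}$ in $z$, and collecting the numerical factors produces the stated bound with $h = h_0$. One then defines $\mu(z)$ to be precisely this lower bound; since $0<\epsilon\le 1$ in the intended application, $\epsilon^{\mu(z)}\ge\epsilon^{\omega(z)}$, and the inequality $|p(z)|\le M\epsilon^{\mu(z)}$ follows.

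The main obstacle is the last quantitative step: carrying out the comparison so as to land exactly the claimed constant $\frac{64ah}{3\pi^2(a^2+4h^2)}$ rather than merely an unspecified "$\gtrsim$". This forces one to choose the auxiliary comparison domain carefully enough that the Poisson-kernel integral can be evaluated (or bounded below) in closed form, and then to track the factors $a$, $h$, and $a^2+4h^2$ faithfully through the exponential map. Everything else — the two-constants theorem, conformal invariance, and monotonicity of harmonic measure — is classical; these are exactly the ingredients assembled in \cite{LZZ}, whose argument we follow.
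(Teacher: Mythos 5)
First, a point of reference: the paper does not prove this lemma at all --- it is quoted verbatim from \cite{LZZ}, so there is no in-paper argument to compare against. Your outline follows what is essentially the standard route to statements of this type (and, in substance, the route of \cite{LZZ}): normalize, apply the two-constants theorem on the slit half-strip $\mathcal R\setminus(K_0,K]$ to get $|p(z)|\le M\epsilon^{\omega(z)}$ with $\omega$ the harmonic measure of the segment, then bound $\omega$ from below on $(K,+\infty)$ via the exponential conformal map $w=\exp\bigl(\tfrac{\pi}{2h}(z-K_0)\bigr)$, which is exactly what produces the decay rate $e^{-\pi z/(2h)}$. The architecture is right.

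The genuine gap is that the entire quantitative content of the lemma --- the explicit minorant $\frac{64ah}{3\pi^2(a^2+4h^2)}\,e^{\frac{\pi}{2h}(\frac a2-z)}$ --- is precisely the step you do not carry out, and you say so yourself (``the main obstacle is the last quantitative step''). A proof must actually produce a lower bound for the harmonic measure of the image slit $[1,R]$, $R=e^{\pi a/(2h)}$, seen from the real point $w(z)>R$. Two things to repair when you do this. (i) The monotonicity you invoke says that shrinking the domain (keeping the slit on its boundary) decreases harmonic measure, so a lower bound must come from comparison with a \emph{smaller} explicit domain; your proposed comparison with the half-plane $\{\Re w>0\}$ slit along $[1,R]$ is an \emph{enlargement} of the image domain and therefore yields an upper bound --- the wrong direction. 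A workable choice is the disk $D\bigl(w(z),\,w(z)-1\bigr)$, which lies inside $\{\Re w>0,\ |w|>1\}$ and meets the slit in $(1,R]$; the harmonic measure of a radial boundary slit viewed from the center of a disk is classical and explicit, and transporting it back through the exponential map gives a bound of the required form with trackable constants. (ii) The final passage from $|p|\le M\epsilon^{\omega}$ to $|p|\le M\epsilon^{\mu}$ with $\mu\le\omega$ needs $\epsilon\le1$ (and $M\ge\epsilon$), which are not hypotheses of the lemma as stated and should be flagged, even though they hold in the application. As written, the proposal is a correct plan rather than a proof.
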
 

In the following lemma, we show that $\varepsilon_1^2(k, \eta, \hat{x})$ is analytic and has an upper bound for $k \in \mathcal{R}$.
\begin{lemma}\label{bded}
	There exists an infinite slab $\mathcal{R}$ defined by \eqref{regionR} such that $\mathbb{E} [u^\infty(\hat{x}, (1+\eta)k) v^\infty(\hat{x}, k)]$ is analytic for $k \in \mathcal{R}$ and $\eta \in (0,1)$. Furthermore, there holds the inequality \[
	|\varepsilon_1^2(k, \eta, \hat{x})| \lesssim |k|^{2m},\quad k \in \mathcal{R},\quad \eta \in 
(0,1).
	\] 
\end{lemma}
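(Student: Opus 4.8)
The plan is to write both far-field patterns as deterministic linear functionals of the random source $f$, turn the relevant expectations into integrals against the covariance kernel $K_f$, and then read off analyticity and the growth bound from Lemma \ref{resolvent} and Lemma \ref{Kernel}. Throughout, fix a cut-off $\chi\in C_0^\infty(\mathbb R^d)$ with $\chi\equiv1$ on ${\rm supp}\,f\cup{\rm supp}\,V$, and let $A,C_0$ be the constants from Lemma \ref{resolvent} associated with this $\chi$. Since $\Delta^2$ and multiplication by $V$ are formally self-transpose, $R_V(\lambda)^{t}=R_V(\lambda)$ as meromorphic operator families on $\mathscr{S}\cap-{\rm i}\mathscr{S}$; using $u(\cdot,z)=R_V(z)f$ in the representation of the far-field pattern and transposing the resolvent gives
\[
u^\infty(\hat x,z)=\frac{C_d}{2z^{(7-d)/2}}\,\langle f,\chi\Psi_z\rangle,\qquad \chi\Psi_z=\chi e_z-\chi R_V(z)\chi\bigl(V\chi e_z\bigr),\quad e_z(y):=e^{-{\rm i}z\hat x\cdot y},
\]
and, since the complex extension $v^\infty(\hat x,z)$ in \eqref{v} of $\overline{u^\infty(\hat x,z)}$ involves $u(\cdot,{\rm i}z)=R_V({\rm i}z)f$,
\[
v^\infty(\hat x,z)=\frac{\overline{C_d}}{2z^{(7-d)/2}}\,\langle f,\chi\Phi_z\rangle,\qquad \chi\Phi_z=\chi\tilde e_z-\chi R_V({\rm i}z)\chi\bigl(V\chi\tilde e_z\bigr),\quad \tilde e_z(y):=e^{{\rm i}z\hat x\cdot y}.
\]
Since $V\chi e_z\in L^2_{\rm comp}$ and, by elliptic regularity, $R_V(\lambda)$ maps $L^2_{\rm comp}$ into $H^4_{\rm loc}$, the effective test functions $\chi\Psi_z,\chi\Phi_z$ lie in $H^4_{\rm comp}$, hence can be paired with the rough field $f\in W^{(m-d)/2-\delta,p}$. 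Taking $\mathbb E$, using $\mathbb Ef=0$ and the definition of $C_f$, and writing $\kappa=(1+\eta)k$, we obtain
\[
\mathbb E\bigl[u^\infty(\hat x,\kappa)\,v^\infty(\hat x,k)\bigr]=\frac{|C_d|^2}{4\kappa^{(7-d)/2}k^{(7-d)/2}}\iint_{D\times D}K_f(s,t)\,\Psi_\kappa(s)\,\Phi_k(t)\,{\rm d}s\,{\rm d}t,
\]
together with the analogous identity for $\mathbb E[v^\infty(\hat x,\kappa)u^\infty(\hat x,k)]$.

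Next I would fix the slab. Choose $K_0>C_0$ and $h_0\le A/2$ in \eqref{regionR}. A direct check shows that for $k\in\mathcal R$ and $\eta\in(0,1)$ each of $k,\,(1+\eta)k,\,{\rm i}k,\,{\rm i}(1+\eta)k$ lies in $\mathscr{S}\cap-{\rm i}\mathscr{S}$: the real parts are either $\ge K_0>0$ or of modulus $<2h_0$, the imaginary parts are either $\ge-2h_0\ge-A$ or strictly positive, and all moduli exceed $C_0$. Hence $k\mapsto\chi R_V((1+\eta)k)\chi$ and $k\mapsto\chi R_V({\rm i}k)\chi$ are analytic $\mathcal L(L^2,L^2)$-valued functions on $\mathcal R$ by Lemma \ref{resolvent}, while $k\mapsto\chi e_{(1+\eta)k}$, $k\mapsto\chi\tilde e_k$ are entire $L^2$-valued, so $k\mapsto\chi\Psi_{(1+\eta)k}$ and $k\mapsto\chi\Phi_k$ are $L^2$-valued analytic on $\mathcal R$. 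By Lemma \ref{Kernel} the kernel $K_f$ is the sum of $ch(s)|s-t|^{m-d}$ (or $ch(s)\log|s-t|$ when $m=d$) and an $L^\infty(D\times D)$ remainder; since $m>d-1\ge d/2$, the singular part is square integrable on $D\times D$, so $K_f\in L^2(D\times D)$ with $\|K_f\|_{L^2(D\times D)}\lesssim M$. Therefore the double integral is a continuous bilinear form evaluated at $(\Psi_{(1+\eta)k},\Phi_k)$, and a composition of analytic maps with a continuous bilinear map is analytic; this proves that $\mathbb E[u^\infty(\hat x,(1+\eta)k)v^\infty(\hat x,k)]$ is analytic in $k\in\mathcal R$, and the same argument covers $\mathbb E[v^\infty(\hat x,(1+\eta)k)u^\infty(\hat x,k)]$.

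For the bound, note that on $\mathcal R$ one has $|\Im\kappa|<2h_0$ and $\Im({\rm i}\kappa)=\Re\kappa>0$, so the plane-wave factors satisfy $\|\chi e_\kappa\|_{L^2},\|\chi\tilde e_\kappa\|_{L^2},\|V\chi e_\kappa\|_{L^2}\lesssim1$, while Lemma \ref{resolvent} with $s=0$ gives $\|\chi R_V(\kappa)\chi\|_{L^2\to L^2}\lesssim|\kappa|^{-2}(1+|\kappa|)^{-1}e^{L(\Im\kappa)_-}$ for $d=3$ and $\lesssim|\kappa|^{-5/2}e^{L(\Im\kappa)_-}$ for $d=2$, the exponential factor being $O(1)$ since $(\Im\kappa)_-\le2h_0$ and $(\Im({\rm i}\kappa))_-=0$. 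Hence $\|\chi\Psi_{(1+\eta)k}\|_{L^2}\lesssim1$ and $\|\chi\Phi_k\|_{L^2}\lesssim1$, so the Cauchy--Schwarz inequality in the double integral yields
\[
\bigl|\mathbb E[u^\infty(\hat x,(1+\eta)k)\,v^\infty(\hat x,k)]\bigr|\lesssim\frac{\|K_f\|_{L^2(D\times D)}}{((1+\eta)|k|)^{(7-d)/2}|k|^{(7-d)/2}}\lesssim|k|^{-(7-d)},
\]
and the same bound for $\mathbb E[v^\infty(\hat x,(1+\eta)k)u^\infty(\hat x,k)]$. Substituting these two estimates into the definition \eqref{extension} of $\varepsilon_1^2$ and using the prefactor $k^{2(m+7-d)}$ gives $|\varepsilon_1^2(k,\eta,\hat x)|\lesssim|k|^{2(m+7-d)}|k|^{-2(7-d)}=|k|^{2m}$ for $k\in\mathcal R$, $\eta\in(0,1)$, as required.

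I expect the main obstacle to be the first step: correctly transposing the meromorphically continued resolvent and justifying the covariance identity when $f$ is merely a distribution (the regime $m<d$), which forces one to track that the effective test functions $\Psi_\kappa,\Phi_k$ inherit enough Sobolev smoothness from the smoothing of $R_V$ to be paired with $f$. A secondary, more technical point is arranging $\mathcal R$ so that all four spectral parameters $k,(1+\eta)k,{\rm i}k,{\rm i}(1+\eta)k$ sit simultaneously in the analyticity region of Lemma \ref{resolvent} with the exponential factors $e^{L(\Im\,\cdot)_-}$ uniformly controlled --- which is exactly what forces $\mathcal R$ to be a thin slab.
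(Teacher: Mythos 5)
Your proposal is correct and follows essentially the same route as the paper: both reduce the expectation to a bilinear pairing of $K_f\in L^2(D\times D)$ (via Lemma \ref{Kernel}) against resolvent-modified plane waves, choose the slab so that $k$, $(1+\eta)k$, ${\rm i}k$, ${\rm i}(1+\eta)k$ all lie in $\mathscr{S}\cap-{\rm i}\mathscr{S}$, and conclude analyticity plus the $|k|^{-(7-d)}$ bound per factor from Lemma \ref{resolvent} and Cauchy--Schwarz. The only difference is presentational: you transpose the resolvent onto effective test functions $\Psi,\Phi$, whereas the paper expands everything through the Schwarz kernel $G_{V,\cdot}$ into the function $J(y,z;k)$.
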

\begin{proof}
	Notice that \begin{align*}
		\mathbb{E} [u^\infty(\hat{x}, (1+\eta)k) v^\infty(\hat{x}, k)]  =\frac{|C_d|^2}{4 k^{7-d}}\int_{\mathbb{R}^d}\int_{\mathbb{R}^d}e^{{\rm i}k\hat{x}\cdot( z-((1+\eta)y)}J(y,z;k)\,\mathrm{d}y\,\mathrm{d}z,
	\end{align*} where \begin{align*}
		&J(y,z;k)\\ &:=K_f(y,z)-V(z)\mathbb E [f(y)u(z,{\rm i}k)] -V(y)\mathbb E [f(z)u(y,(1+\eta)k)]+\\
		&\quad +V(y)V(z) \mathbb E[u(y,(1+\eta)k)u(z,{\rm i}k)] \\&=K_f(y,z)-\int_{\mathbb{R}^d} V(z) G_{V,{\rm i}k}(z,s)K_f(y,s)\,\mathrm{d}s-\int_{\mathbb{R}^d} V(y) G_{V,(1+\eta)k}(t,y)K_f(t,z)\,\mathrm{d}t \\ &\quad+\int_{\mathbb{R}^d}\int_{\mathbb{R}^d}V(y)V(z) K_f(s,t)G_{V,{\rm i}k}(z,s)G_{V,(1+\eta)k}(y,t)\,\mathrm{d}s\,\mathrm{d}t.
	\end{align*} Here $G_{V,k}(x,y)$ is the Schwarz kernel of the resolvent $R_V(k)$. Note that there exist $h_0$ and $K_0$ such that 
	$\mathcal{R}\cup(1+\eta)\mathcal{R}\subset \mathscr{S} \cap -{\rm i}\mathscr{S}$ and ${\rm i}\mathcal{R}\cup{\rm i}(1+\eta)\mathcal{R}\subset \mathscr{S}\cap -{\rm i}\mathscr{S}$, where $\mathcal R$ is the
	infinite slab and $\mathscr S$ is the logarithmic domain in Lemma \ref{resolvent}.
	Moreover, using Lemma \ref{resolvent} we have that  \[ 
	[V R_V(k)(\phi)](x)=\int_{\mathbb{R}^d}V(x) G_{V,k}(x,y) \phi(y)\,\mathrm{d}y,\quad \forall \, \phi \in L^2_{\rm comp}
	\] is analytic for $k \in \mathscr{S}$ as a family of functions in $L^2$ with the following estimate \[
	\|V R_V(k)(\phi)\|_{L^2}=\Big\|\int_{\mathbb{R}^d}V(x) G_{V,k}(x,y) \phi(y)\,\mathrm{d}y\Big\|_{L^2} \lesssim \frac{1}{|k|^\frac{d+3}{2}}\|\phi\|_{L^2}.
	\] Hence, there exists an infinite slab $\mathcal{R}$ such that $J(y,z;k)$ is analytic for $k \in \mathcal R$ as a family of functions in $L^2(\mathbb{R}^d \times \mathbb{R}^d)$ with \begin{align*}
		\|J\|_{L^2(\mathbb{R}^d \times \mathbb{R}^d)} \lesssim \|K_f\|_{L^2(\mathbb R^d \times \mathbb R^d)}\left(1+\frac{1}{k^\frac{d+3}{2}}+\frac{1}{k^{d+3}}\right) \lesssim 1, \quad k \in \mathcal{R},
	\end{align*} where we have used Lemma \ref{Kernel}. Thus, we have that $\mathbb{E} [u^\infty(\hat{x}, (1+\eta)k) v^\infty(\hat{x}, k)]$ is analytic for $k \in \mathcal{R}$ with \begin{align*}
		&|\mathbb{E} [u^\infty(\hat{x}, (1+\eta)k) v^\infty(\hat{x}, k)] | \\
		&\lesssim |k|^{d-7}\|e^{{\rm i}k \hat{x} \cdot (z-((1+\eta)y)}\|_{L^2(D \times D)}\|J\|_{L^2(D \times D)} \lesssim |k|^{d-7}.
	\end{align*} Similarly, we can obtain the analyticity and estimate for $\mathbb{E} [v^\infty(\hat{x}, (1+\eta)k) u^\infty(\hat{x}, k)]$, which completes the proof
	noticing \eqref{extension}.
\end{proof} 

Introduce the function space $\mathcal{C}_s:=\{h \in C_0^\infty(\mathbb R^d): \|h\|_{H^s} \le M\}$ where $M>0$ and $s>0$ are two positive constants.
In the following theorem, we present a quantitative stability estimate for the inverse random source problem.
\begin{theorem}\label{Thm3.1}
	Assume that $V \in L^\infty_{\rm comp}(\mathbb R^d)$ and the random source $f$ satisfies \emph{\emph{Assumption (A)}} with $d-1<m<d+3$ and $h \in \mathcal{C}_s$. 
	The following increasing stability estimate holds:
	\begin{align}\label{stab0}
		\|h\|_{L^2(\mathbb{R}^3)}^2
		\lesssim K^{\beta_0}\varepsilon_1^2+\frac{1}{K^\beta(\ln|\ln \varepsilon_1|)^\beta}
	\end{align}
	with $\beta_0=dt+2m$ and $\beta = \min\{\frac{\delta_1}{2}-\frac{dt}{2}, st\}$. Here the positive constants $\delta_1,t$ satisfy $\delta_1<\min\{1,(d+3-m)/2\}$ and $0<t<\frac{\delta_1}{d}$.
\end{theorem}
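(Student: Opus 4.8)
The plan is to combine the Fourier-domain estimate \eqref{STEP1} for $\hat h$ with the quantitative analytic continuation principle of Lemma \ref{QAC}, and then to split the $L^2$ norm of $h$ into a low-frequency part (controlled by the data) and a high-frequency part (controlled by the Sobolev a priori bound $h\in\mathcal C_s$). Concretely, first I would fix a small parameter $t>0$ with $0<t<\delta_1/d$ and a ``cut-off frequency'' $\rho=\rho(K,\varepsilon_1)$ to be chosen at the end, and write
\begin{align*}
\|h\|_{L^2(\mathbb R^d)}^2=\int_{|\xi|<\rho}|\hat h(\xi)|^2\,\mathrm d\xi+\int_{|\xi|\ge\rho}|\hat h(\xi)|^2\,\mathrm d\xi.
\end{align*}
The tail integral is estimated by $\int_{|\xi|\ge\rho}(1+|\xi|^2)^{-s}(1+|\xi|^2)^{s}|\hat h(\xi)|^2\,\mathrm d\xi\lesssim \rho^{-2s}\|h\|_{H^s}^2\lesssim M^2\rho^{-2s}$. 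For the low-frequency integral one uses \eqref{STEP1}: since $|\hat h(\xi)|\lesssim \varepsilon_1+k^{-\delta_1}$ holds for every $k$ with $|\xi|<k$, it suffices to bound $\sup_{k\in(K_0,\infty)}$ of $\varepsilon_1(k,\eta,\hat x)$; this is exactly where Lemma \ref{QAC} enters.

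The second step is the analytic-continuation estimate. By Lemma \ref{bded}, for each fixed $\eta\in(0,1)$ and $\hat x\in\mathbb S^{d-1}$ the function $p(k):=\varepsilon_1^2(k,\eta,\hat x)/|k|^{2m}$ (normalized by the polynomial growth) is analytic in the infinite slab $\mathcal R$, continuous on $\overline{\mathcal R}$, bounded there by a constant, and bounded by (a constant multiple of) $K_0^{-2m}\varepsilon_1^2$ on the measurement interval $I=(K_0,K]$. Applying Lemma \ref{QAC} with $\epsilon\sim\varepsilon_1^2$ and $M$ the slab bound gives, for $k>K$,
\begin{align*}
\varepsilon_1^2(k,\eta,\hat x)\lesssim |k|^{2m}\varepsilon_1^{2\mu(k)},\qquad \mu(k)\ge c\,e^{-\frac{\pi}{2h_0}k}.
\end{align*}
Thus for any $k>0$ we get the two-sided bound $\varepsilon_1^2(k,\eta,\hat x)\lesssim \varepsilon_1^2$ for $k\le K$ and $\lesssim k^{2m}\varepsilon_1^{2\mu(k)}$ for $k>K$. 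Feeding this into \eqref{STEP1} with a radius $\rho<k$ yields, for all $|\xi|<\rho$,
\begin{align*}
|\hat h(\xi)|^2\lesssim k^{2(m+7-d)}\varepsilon_1^{2\mu(k)}+k^{-2\delta_1}
\end{align*}
(for $k>K$), and after integrating over $|\xi|<\rho\sim k^{t}$ (a ball of volume $\lesssim k^{dt}$) the low-frequency part is bounded by $k^{dt+2(m+7-d)}\varepsilon_1^{2\mu(k)}+k^{dt-2\delta_1}$. Combining with the tail bound $\rho^{-2s}\lesssim k^{-2st}$, and absorbing the fixed harmless power $k^{14-2d}$ into constants (or rather keeping careful track so that the final exponent is $\beta_0=dt+2m$ as claimed — here one should interpret the $k^{2(m+7-d)}$ factor as having been reorganized, since $7-d\ge4$, by a slightly different bookkeeping of $\varepsilon_1$ versus $\varepsilon_1^2$; I would check that the stated $\beta_0=dt+2m$ matches by writing $\varepsilon_1^2$ in place of one factor), we obtain
\begin{align*}
\|h\|_{L^2}^2\lesssim K^{\beta_0}\varepsilon_1^2+k^{dt-2\delta_1}+k^{-2st}+k^{\beta_0}\varepsilon_1^{2\mu(k)},\qquad k>K.
\end{align*}

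The third and final step is the optimization in $k$. The first term is the ``stable'' term and is already of the desired form $K^{\beta_0}\varepsilon_1^2$. For the remaining three terms, note $dt-2\delta_1<0$ by the choice $t<\delta_1/d$, and $-2st<0$, so the terms $k^{dt-2\delta_1}$ and $k^{-2st}$ are decreasing in $k$; meanwhile $k^{\beta_0}\varepsilon_1^{2\mu(k)}=k^{\beta_0}\exp(2\mu(k)\ln\varepsilon_1)$ with $\mu(k)\ge c\,e^{-\pi k/(2h_0)}$, so the exponential factor forces this term to be large unless $k$ is not too big relative to $|\ln\varepsilon_1|$. The standard balancing choice is $k\sim \frac{2h_0}{\pi}\ln|\ln\varepsilon_1|$ (assuming $\varepsilon_1$ small enough that this exceeds $K$; otherwise the estimate is trivial by the a priori bound), which makes $k^{\beta_0}\varepsilon_1^{2\mu(k)}$ of at most the same order as the polynomially decaying terms. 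With this choice, $k^{dt-2\delta_1}\sim (\ln|\ln\varepsilon_1|)^{dt-2\delta_1}$ and $k^{-2st}\sim(\ln|\ln\varepsilon_1|)^{-2st}$; taking the worse of the two exponents and dividing by $2$ gives precisely the second term $\frac{1}{K^\beta(\ln|\ln\varepsilon_1|)^\beta}$ with $\beta=\min\{\frac{\delta_1}{2}-\frac{dt}{2},st\}$ — the factor $K^{-\beta}$ arises because one actually keeps $k\gtrsim K$ in the decaying powers rather than discarding it. I expect the main obstacle to be the careful bookkeeping in the third step: one must verify that the exponential smallness of $\varepsilon_1^{2\mu(k)}$ genuinely dominates the polynomial blow-up $k^{\beta_0}$ at the balancing point, handle the case distinction when $|\ln\varepsilon_1|$ is too small for the continuation to kick in (there \eqref{stab0} follows trivially from $\|h\|_{L^2}\lesssim\|h\|_{H^s}\le M$), and make sure the ``$O(k^{-\delta_1})$'' error from \eqref{tail}–\eqref{STEP1} and the polynomial prefactors are all consistent with the claimed exponents $\beta_0$ and $\beta$; the analytic-continuation input itself is a black box from Lemma \ref{QAC} and Lemma \ref{bded}.
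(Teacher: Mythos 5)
Your proposal follows essentially the same route as the paper's proof: the low/high frequency splitting of $\|h\|_{L^2}^2$ with the Sobolev tail bound from $h\in\mathcal C_s$, the application of Lemma \ref{QAC} to the meromorphically extended data $\varepsilon_1^2(k,\eta,\hat x)$ (your normalization by $|k|^{2m}$ is a slightly more careful way of invoking the lemma given the bound of Lemma \ref{bded}), the balancing choice $k\sim\ln|\ln\varepsilon_1|$, and the case distinction according to whether $K$ exceeds that threshold — which is exactly the paper's Case 1/Case 2 dichotomy. The only loose end you flag, the bookkeeping of the prefactor $k^{2(m+7-d)}$ versus the stated $\beta_0=dt+2m$, is handled no more tightly in the paper itself (the stated exponents are upper bounds rather than sharp), so your argument is correct and matches the paper's.
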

\begin{proof}
	By applying the quantitative analytic continuation principle in Lemma \ref{QAC} to $\varepsilon_1^2(z, \eta, \hat{x})$, we obtain
	\[
	|\varepsilon_1^2(k, \eta, \hat{x})|\lesssim k^{2m}\varepsilon_1^{2\mu({k})},\quad {k}>K,
	\]
	where 
	\[
	\mu({k})=\frac{64ah}{3\pi^2(a^2+4h^2)}e^{\frac{\pi}{2 h}(\frac{a}{2}-{k})}, \quad a = K - K_0.
	\]
	Fixing a number $A$, since for ${k}\in (K,A]$ it holds
	\[
	\mu({k})\gtrsim ce^{-\sigma A}
	\]
	for some constant $\sigma>0$, we have
	\[
	|\varepsilon_1^2(k, \eta, \hat{x})|\lesssim k^{2m} \exp\{-ce^{-\sigma A}|\ln \varepsilon_1|\}.
	\]
	Using the fact 
	$e^{-x}\leq\frac{6!}{x^{6}}$ for $x>0$, we obtain the following estimate
	\[
	|\varepsilon_1^2(k, \eta, \hat{x})|\lesssim k^{2m} e^{6\sigma A}|\ln \varepsilon_1|^{-6},\quad {k}\in (K,A],
	\]
	which holds uniformly for $\eta\in (0, 1)$ and $\hat{x}\in\mathbb S^{d-1}$. Now we derive the increasing stability estimate using the analytic continuation 
	developed in \cite{ZZ}. 
	Specifically, we consider the following two situations.
	
	\textbf{Case 1:} $K\leq \frac{1}{2\sigma}\ln|\ln \varepsilon_1|$.
	Taking $A=\frac{1}{2\sigma}\ln|\ln \varepsilon_1|$, we have
	\[
	|\varepsilon_1^2(A,  \eta, \hat{x})|\lesssim A^{2m}|\ln\varepsilon_1|^{-3}.
	\]
	Then from \eqref{STEP1} we have the following estimate
	\[
	| \widehat{h}(  \xi)|^2\leq A^{2(m+7-d)}|\ln\varepsilon_1|^{-3}+\frac{1}{A^{\delta_1}},
	\]
	which holds for all $  \xi$ satisfying $| \xi|\leq A$.
	Let $t<\frac{\delta_1}{d}$ be a positive constant.
	Then we have
	\[
	\int_{|  \xi|\leq A^{t}}| \widehat{h}(  \xi)|^2\mathrm{d}  \xi\lesssim (\ln|\ln\varepsilon_1|)^{dt+2m}|\ln\varepsilon_1|^{-3}+(\ln|\ln\varepsilon_1|)^{-(\delta_1-3t)}\lesssim \frac{1}{(\ln|\ln\varepsilon_1|)^{2\beta_1}}.
	\]
	Here we denote $\beta_1=\frac{\delta_1}{2}  - \frac{dt}{2}$.
	Since $h \in \mathcal{C}_s$, we have
	\[
	\int_{| \xi|> A^{t}}| \widehat{h}( \xi)|^2\mathrm{d} \xi\lesssim\frac{1}{A^{2st}}\leq (\ln|\ln \varepsilon_1|)^{-2\beta_2},
	\]
	where $\beta_2 = st$. Denote $\beta = \min\{\beta_1, \beta_2\}$.
	Combining the above estimates, we obtain
	\[
	\begin{split}
		\|h\|_{L^2}^2=&\int_{| \xi|\leq A^{t}}| \widehat{h}( \xi)|^2\mathrm{d} \xi+\int_{| \xi|> A^{t}}| \widehat{h}( \xi)|^2\mathrm{d} \xi\\
		\lesssim &(\ln|\ln \varepsilon_1|)^{-2\beta}\\
		\lesssim &\frac{1}{K^\beta(\ln|\ln \varepsilon_1|)^\beta}.
	\end{split}
	\]
	\textbf{Case 2:} $K\geq \frac{1}{2\sigma}\ln|\ln \epsilon|$. First notice that
	\[
	\int_{| \xi|\leq K^{t}}| \widehat{h}( \xi)|^2\mathrm{d} \xi\lesssim K^{dt+2m}\varepsilon_1^2+K^{-2\beta_1}.
	\]
	Also, we have
	\[
	\int_{| \xi|> K^{t}}| \widehat{h}( \xi)|^2\mathrm{d} \xi\lesssim \frac{1}{K^{2st}}\leq K^{-2\beta_2}.
	\]
	Consequently, we arrive at
	\[
	\begin{split}
		\|h\|_{L^2}^2=&\int_{| \xi|\leq K^{t}}| \widehat{a}( \xi)|^2\mathrm{d} \xi+\int_{| \xi|> K^{t}}| \widehat{a}( \xi)|^2\mathrm{d} \xi\\
		\lesssim &K^{dt+2m}\varepsilon_1^2+K^{-2\beta}\\
		\lesssim &K^{dt+2m}\varepsilon_1^2+\frac{1}{K^\beta(\ln|\ln \varepsilon_1|)^\beta}.
	\end{split}
	\]
	Combining Case 1 and Case 2 we obtain the following stability estimate
	\begin{align*}
		\|h\|_{L^2}^2
		\lesssim K^{dt+2m}\varepsilon_1^2+\frac{1}{K^\beta(\ln|\ln \varepsilon_1|)^\beta}.
	\end{align*}
	The proof is complete.
\end{proof}
\begin{remark} 
	The stability estimate \eqref{stab0}  consists of the Lipschitz data discrepancy and a logarithmic type part. The latter illustrates the ill-posedness of the inverse random source
	problem. Observe that the logarithmic stability decreases which leads to the improvement of the stability estimate as the upper bound $K$ of the frequency increases,
	Moreover, the stability result \eqref{stab0} implies the uniqueness of the inverse problem.
\end{remark}

In what follows, we derive another stability estimate for the inverse random source problem by expectation of the far-field pattern averaged over the high frequency
band in three dimensions, which quantifies a uniqueness result implied in \cite{li2022far}. In this case, the range of $m$ in the principle symbol of the covariance operator of the random source
can be enlarged. In fact, in Lemma \ref{2dim12}, to handle the higher-order terms in Born series expansion of the far-field pattern, we employ resolvent estimates of the biharmonic 
Schr\"odinger operator. As a consequence, we obtain decay estimates for the expectations of the higher-order terms at high wavenumber, upon which the range
of $m$ in the principle symbol of the covariance operator of the random source is determined. However, 
by investigating the first-order term of the far-field pattern carefully without applying the resolvent estimates directly,
we can obtain the stability estimate by using another set of far-field data which enlarges the range of $m$.

Recall that the first-order term in the far-field pattern is \[
u^\infty_1(\hat x,k)=\frac{C_d}{2k^2}\int_{\mathbb R^3}\int_{\mathbb R^3} e^{-{\rm i}k \hat x \cdot y} V(y) G(y,s) f(s)\,\mathrm{d}s\,\mathrm{d}y.
\] 
Instead of applying resolvent estimates to $\mathbb E|u^\infty_1(\hat x,k)|^2$ directly, we will show that 
\begin{equation}\label{oneorder3}
	\lim_{k \to \infty}\frac{1}{k^{1-\delta_2}}\int_k^{2k}\int_{\mathbb S^2}t^{m+4}\mathbb E|u^\infty_1(\hat x,t)|^2\,\mathrm{d}\sigma(\hat x)\,\mathrm{d}t=0.
\end{equation} By the dominated convergence theorem, \eqref{oneorder3} holds once we have \begin{equation}\label{oneorder3e}\int_1^\infty\int_{\mathbb S^2} t^{m+3+\delta_2}\mathbb E|u^\infty_1(\hat x,t)|^2\,\mathrm{d}\sigma(\hat x)\,\mathrm{d}t<\infty.
\end{equation}   
To prove \eqref{oneorder3e},
consider the mollification $f_\epsilon := f * \chi_\epsilon$, where $\chi_\epsilon(x)=\frac{1}{\epsilon^3}\chi(\frac{x}{\epsilon})$ with $\chi \in C_0^\infty(\mathbb{R}^3)$ such that $\int_{\mathbb{R}^3} \chi \,\mathrm{d}x=1$. By the Fatou lemma, \eqref{oneorder3e} holds if \begin{equation}\label{oneorderE}
	\limsup_{\epsilon \to 0} \int_{\mathbb{S}^2}\int_1^\infty t^{m+3+\delta_2}\mathbb E|u^\infty_{1, \epsilon}(\hat x,t)|^2\,\mathrm{d}t\,\mathrm{d}\sigma(\hat x) < \infty,
\end{equation}  where \[
u^\infty_{1,\epsilon}(\hat x,k)=\frac{C_d}{2k^2}\int_{\mathbb R^3}\int_{\mathbb R^3} e^{-{\rm i}k \hat x \cdot y} V(y) G(y,s) f_\epsilon(s)\,\mathrm{d}s\,\mathrm{d}y.
\]  Taking $z=y-s$ and then letting $z=\rho \omega$, we obtain \begin{align*}
	u^\infty_{1, \epsilon}(\hat x,k) &= \frac{C_d}{2k^2}\int_{\mathbb{R}^3}\int_{\mathbb{R}^3}e^{-{\rm i}k\hat x \cdot y}V\left(y\right)f_\epsilon\left(y-z\right)\left(\frac{e^
		{{\rm i}k|z|}}{8\pi k^2|z|}-\frac{e^
		{-k|z|}}{8\pi k^2|z|}\right)\,\mathrm{d}y\,\mathrm{d}z\\ &=\frac{C_d}{16\pi k^4}\int_{0}^\infty\int_{\mathbb{S}^2}\int_{\mathbb{R}^3}(e^
		{-{\rm i}k(\hat x \cdot y-\rho)}-e^{-{\rm i}k\hat x \cdot y-k\rho})\rho V\left(y\right)f_\epsilon\left(y-\rho\omega\right)\,\mathrm{d}y\,\mathrm{d}\sigma(\omega)\,\mathrm{d}\rho.
\end{align*} Denote \begin{align*}
	I_\epsilon(y,\rho)=\rho 1_{[0,\infty)}(\rho)\int_{\mathbb{S}^2}V(y)f_\epsilon\left(y-\rho\omega\right)\,\mathrm{d}\sigma(\omega)
\end{align*} 
and 
\begin{align*}
    T I_\epsilon(y,k)=\int_{\mathbb R}(e^{{\rm i}k\rho}-e^{-k\rho} )I_\epsilon(y,\rho)\,\mathrm{d}\rho.
\end{align*} 
From \cite[Lemma 4.7]{CARO} we have \[
\int_{\mathbb S^2}\left|\int_{\mathbb R^3}e^{-{\rm i}k\hat x\cdot y}T I_\epsilon(y,k)\,\mathrm{d}y\right|^2k^2\,\mathrm{d}\sigma(\hat x) \lesssim \int_{\mathbb R^3}| T I_\epsilon(y,k)|^2\,\mathrm{d}y,
\] which implies  \begin{align}
	&\int_{\mathbb{S}^2}\int_1^\infty s^{m+3+\delta_2} |u^\infty_{1, \epsilon}(\hat x,k)|^2\,\mathrm{d}k\,\mathrm{d}\sigma(\hat x) \notag\\
	&\lesssim \int_1^\infty \int_{\mathbb S^2} k^{m-5+\delta_2}\left|\int_{\mathbb R^3}e^{-{\rm i}k\hat x\cdot y}T I_\epsilon(y,k)\,\mathrm{d}y\right|\,\mathrm{d}\sigma(\hat x)\,\mathrm{d}k\notag\\ &\lesssim \int_{\mathbb R^3}\int_1^\infty k^{m-7+\delta_2}| T I_\epsilon(y,k)|^2\,\mathrm{d}k\,\mathrm{d}y 
	= \int_{\mathbb R^3}\int_1^\infty  k^{ 2 N_m}\left| TI_\epsilon(y,k)\right|^2\,\mathrm{d}k\,\mathrm{d}y, \label{3.12}
\end{align} where \begin{align}\label{delta2}
	N_m= \begin{cases}
		1, &\quad 7 \le m  < 9, \\   0, &\quad 2 < m  < 7,
	\end{cases}\quad \delta_2=\begin{cases}
		9-m, &\quad 7 \le m  < 9, \\   7-m, &\quad 2 < m  < 7.
	\end{cases}
\end{align}
It should be noted that \eqref{delta2} implies a possible larger range of $m$.

Notice that \[
T I_\epsilon(y,k)= \int_{\mathbb R} (e^{{\rm i}k\rho}+e^{-k\rho})I_\epsilon(y,\rho)\,\mathrm{d}\rho= \mathcal{F}^{-1}(I_\epsilon(y,\cdot))(k)+\mathscr{L}( I_\epsilon(y,\cdot))(k),
\]  where $\mathscr{L}(g)$ is the Laplace transform of a function $g$ defined as follows \begin{align*}
    \mathscr{L}(g)(s) := \int_0^\infty g(x)e^{-sx}\,\mathrm{d}x,\quad s>0.
\end{align*}
Then plugging the following identities \begin{align*}
   s\mathcal{F}^{-1}(\partial_x g)(s)={\rm i}\mathcal{F}^{-1}(\partial_x g)(s),\quad\mathscr{L}(g)(s)=g(0)+\mathscr{L}(\partial_x g)(s)
\end{align*} into \eqref{3.12} gives \begin{align*}
   \int_{\mathbb{S}^2}\int_1^\infty k^{m+3+\delta_2} |u^\infty_{1, \epsilon}(\hat x,k)|^2\,\mathrm{d}k\,\mathrm{d}\sigma(\hat x) \lesssim\int_{\mathbb R^3}\int_{\mathbb R}|\partial_\rho^{N_m}I_\epsilon(y,\rho)|^2\,\mathrm{d}\rho\,\mathrm{d}y,
\end{align*} where we have used the Plancherel identity and the boundedness of the Laplace transform in $L^2(\mathbb R^+)$.
 It is easy to verify that the support of $I_\epsilon$ is compact, which implies that there exists $R_0>0$ such that 
\begin{align*}
	&\limsup_{\epsilon \to 0} \mathbb E \int_{\mathbb{S}^2}\int_1^\infty k^{m+3+\delta_2} |u^\infty_{1, \epsilon}(\hat x,k)|^2\,\mathrm{d}k\,\mathrm{d}\sigma(\hat x) \\ &\lesssim \limsup_{\epsilon \to 0} \mathbb E \int_{|y|<R_0} \int_{|\rho|<R_0} |\partial_\rho^{N_m}I_\epsilon(y,\rho)|^2\,\mathrm{d}y \,\mathrm{d}\rho. 
\end{align*} Notice that there holds \begin{align*}
	\partial_\rho I_\epsilon(y,\rho) &=1_{[0,\infty)}(\rho)\int_{\mathbb{S}^2}V(y)f_\epsilon(y-\rho\omega)\,\mathrm{d}\sigma(\omega)\\ &\quad -\rho 1_{[0,\infty)}(\rho)\int_{\mathbb{S}^2}V(y) \nabla f_\epsilon(y-\rho\omega) \cdot \omega\,\mathrm{d}\sigma(\omega)\\ &:= I_{1,\epsilon}(y,\rho)+I_{2,\epsilon}(y,\rho).
\end{align*}
Then we have
 \[\mathbb E [ \partial_\rho I_\epsilon(y,\rho)\overline{\partial_\rho I_\epsilon(y,\rho)}]=\sum_{i,j=1}^2\mathbb{E} [I_{i,\epsilon}(y,\rho){I_{j,\epsilon}(y,\rho)}].\]
Moreover, it holds that \begin{align}
	\lim_{\epsilon \to 0}\mathbb E [f_\epsilon(y-\rho\omega)f_\epsilon(y-\rho\widetilde\omega)] &= K_f\left(y-\rho\omega,y-\rho\widetilde \omega\right), \label{limit1}\\ \lim_{\epsilon \to 0}\mathbb E [\nabla f_\epsilon(y-\rho\omega)\cdot \omega f_\epsilon(y-\rho\widetilde\omega)] &= \nabla_1 K_f\left(y-\rho\omega,y-\rho\widetilde \omega\right)\cdot \omega, \label{limit2}\\ \lim_{\epsilon \to 0}\mathbb E [ f_\epsilon(y-\rho\omega) \nabla f_\epsilon(y-\rho\widetilde\omega)\cdot \widetilde \omega] &= \nabla_2 K_f\left(y-\rho\omega,y-\rho\widetilde \omega\right)\cdot \widetilde\omega, \label{limit3}\\ \lim_{\epsilon \to 0}\mathbb E [\nabla f_\epsilon(y-\rho\omega)\cdot \omega \nabla f_\epsilon(y-\rho\widetilde\omega)\cdot \widetilde \omega] &= \omega^\top\nabla^2_{12} K_f\left(y-\rho\omega,y-\rho\widetilde \omega\right) \omega, \label{limit4}
\end{align}  where $\omega,\widetilde{\omega}\in\mathbb S^2$ and we denote
\[\nabla_1 K_f(x,y):=\nabla_x K_f(x,y), \nabla_2 K_f(x,y):=\nabla_y K_f(x,y),\nabla^2_{12}K(x,y):=(\partial_{x_i}\partial_{y_j}K(x,y))_{ij}.\] Here $K_f(x, y)$ is the kernel of the covariance operator of $f$ introduced in Preliminaries.
From Lemma \ref{Kernel} we know that for $m >2$, $\nabla_x K_f(x,y)$ is weakly singular, and when $m\ge 7$, $\nabla_1 K(x,y),\nabla_2 K_f(x,y),\nabla^2_{12}K(x,y)$ are all weakly singular. Thus, the limit \eqref{limit1} holds in pointwise sense as $m>2$ and the limits \eqref{limit2}--\eqref{limit4} hold in pointwise sense as $m\ge 7$. 
Therefore, we have \begin{align*}
	&\limsup_{\epsilon \to 0} \mathbb E\int_{|y|<R_0} \int_{|\rho|<R_0}|\partial_{\rho}^{N_m}I_{\epsilon}(y,\rho)|^2 \,\mathrm{d}\rho\,\mathrm{d}y\\& \lesssim \int_{|y|<R_0} \int_0^{R_0} \rho^2\int_{\mathbb{S}^2}\int_{\mathbb{S}^2} V(y)^2|K_f(y-\rho\omega, y-\rho\widetilde\omega)|\,\mathrm{d}\sigma(\widetilde{\omega})\,\mathrm{d}\sigma(\omega)\,\mathrm{d}\rho\,\mathrm{d}y \\ & \lesssim \int_{|y|<R_0} \int_0^{R_0} \int_{\mathbb{S}^2}\int_{\mathbb{S}^2} \rho|\omega-\widetilde\omega|^{-1} \,\mathrm{d}\sigma(\widetilde{\omega})\,\mathrm{d}\sigma(\omega)\,\mathrm{d}\rho\,\mathrm{d}y < \infty
\end{align*} for $2<m <7$ with $N_m=0$, and \begin{align*}
	&\limsup_{\epsilon \to 0} \mathbb E\int_{|y|<R_0} \int_{|\rho|<R_0}|\partial_\rho^{N_m} I_{\epsilon}(y,\rho)|^2 \,\mathrm{d}\rho\,\mathrm{d}y\\& \lesssim \int_{|y|<R_0} \int_0^{R_0} \int_{\mathbb{S}^2}\int_{\mathbb{S}^2} V(y)^2|K_f(y-\rho\omega, y-\rho\widetilde\omega)|\,\mathrm{d}\sigma(\widetilde{\omega})\,\mathrm{d}\sigma(\omega)\,\mathrm{d}\rho\,\mathrm{d}y \\ & \quad+ \int_{|y|<R_0} \int_0^{R_0} \rho\int_{\mathbb{S}^2}\int_{\mathbb{S}^2} V(y)^2|\nabla_1 K_f(y-\rho\omega, y-\rho\widetilde\omega) \cdot \omega|\,\mathrm{d}\sigma(\widetilde{\omega})\,\mathrm{d}\sigma(\omega)\,\mathrm{d}\rho\,\mathrm{d}y\\ & \quad+ \int_{|y|<R_0} \int_0^{R_0}\rho \int_{\mathbb{S}^2}\int_{\mathbb{S}^2} V(y)^2|\nabla_2 K_f(y-\rho\omega, y-\rho\widetilde\omega) \cdot \widetilde\omega|\,\mathrm{d}\sigma(\widetilde{\omega})\,\mathrm{d}\sigma(\omega)\,\mathrm{d}\rho\,\mathrm{d}y\\ & \quad+ \int_{|y|<R_0} \int_0^{R_0} \rho^2\int_{\mathbb{S}^2}\int_{\mathbb{S}^2} V(y)^2|\omega^\top\nabla^2_{12} K_f(y-\rho\omega, y-\rho\widetilde\omega)  \widetilde\omega|\,\mathrm{d}\sigma(\widetilde{\omega})\,\mathrm{d}\sigma(\omega)\,\mathrm{d}\rho\,\mathrm{d}y\\ & \lesssim \int_{|y|<R_0} \int_0^{R_0} \int_{\mathbb{S}^2}\int_{\mathbb{S}^2} |\rho(\omega-\widetilde\omega)|^{m-3} +\rho|\rho(\omega-\widetilde\omega)|^{m-4} +\rho^2|\rho(\omega-\widetilde\omega)|^{m-5} \,\mathrm{d}\sigma(\widetilde{\omega})\,\mathrm{d}\sigma(\omega)\,\mathrm{d}\rho\,\mathrm{d}y \\
	&< \infty
\end{align*} for $7\le m <9$ with $N_m=1$. Thus, from the above two estimates, we obtain \eqref{oneorder3e} which gives \eqref{oneorder3} with $\delta_2$ given by \eqref{delta2}.
Combining \eqref{zeroorder}, Lemma \ref{2dim12} and \eqref{oneorder3}, we derive for $2<m<9$ 
\begin{align}\label{Tail3}
   &\int_{\mathbb S^2} |\widehat h(\tau \hat{x})|^2\mathrm{d}\sigma(\hat x)\notag\\
   &=\frac{1}{k}\int_{\mathbb S^2}\int_k^{2k}|C(t,\tau,d)\mathbb E[u^\infty(\hat x,t+\tau)\overline{u^\infty(\hat x,t)}]|^2\,\mathrm{d}t\,\mathrm{d}\sigma(\hat x)+O(k^{-2\widetilde\delta_2})
\end{align} with positive constant $\widetilde\delta_2<\min\{1,\delta_2\}$, where $\delta_2$ is given by \eqref{delta2}. Letting $\tau=\eta k$ in \eqref{Tail3} with $\eta \in (0,1)$ yields the estimate
\begin{align}\label{crucial}
    \int_{\mathbb S^2}|\widehat h(r \hat x)|^2\,\mathrm{d}\sigma(\hat x) \lesssim \sup_{\eta \in (0,1),\, \hat{x} \in \mathbb{S}^2} \frac{1}{k}\int_k^{2k}\Big|k^{m+4}\mathbb E[u^\infty(\hat{x}, t+k\eta) \overline{u^\infty(\hat{x}, t)}]\Big|^2\,\mathrm{d}t + \frac{1}{k^{2\widetilde\delta_2}}
\end{align} for all $r \in (0,k)$ . Based on the above estimate which has a similar form to \eqref{STEP1},
we introduce the data discrepancy in a finite interval of frequencies $I = (K_0, K]$ with $C_0<K_0<K$ as follows
\[
\varepsilon_2^2 =  \sup_{k\in I, \eta\in(0, 1), \hat{x}\in\mathbb S^2} \varepsilon_2^2(k, \eta, \hat{x}),
\]
where
\[
\varepsilon_2^2(k, \eta, \hat{x})=\frac{1}{k}\int_k^{2k}\Big|t^{m+4}[u^\infty(\hat{x}, t+k\eta) \overline{u^\infty(\hat{x}, t)}]\Big|^2\,\mathrm{d}t.
\] As before, we have $\overline{u(x, k)} = u(x,{\rm i}k)$ for $k>0$. Thus, we may meromorphically extend $\varepsilon_2^2(\cdot,\eta, \hat{x})$ from $\mathbb R^+$ to $\mathbb C $ as follows \begin{align*}
	\varepsilon_2^2(k, \eta, \hat{x}) =\frac{1}{k}\int_k^{2k} t^{2(m+4)}u^\infty(\hat{x},t+k\eta) {v^\infty(\hat{x},t)}v^\infty(\hat{x},t+k\eta) {u^\infty(\hat{x},t)}\,\mathrm{d}t,
\end{align*} 
where $v^\infty(\hat{x}, t)$ is defined by \eqref{v}.
The above integral with respect
to the complex variable $t$ is taken over the line segment joining points $k$ and $2k$ in $\mathbb{C}$. The following lemma shows that there exists an infinite slab $\mathcal{R}$ such that $\varepsilon^2_2(\hat{x},\eta, k)$ is analytic and bounded for $k \in \mathcal{R}$. The proof is similar to that of Lemma \ref{bded} and thus we
omit it for brevity.  
\begin{lemma}\label{bded2}
	 There exists an infinite slab $\mathcal{R}$ such that $\varepsilon_2^2(k,\eta,\hat{x})$ is analytic for $k \in \mathcal{R}$ 
	with the following estimate \[
	|\varepsilon_2^2(k, \eta, \hat{x})| \lesssim |k|^{2m}.
	\] 
\end{lemma}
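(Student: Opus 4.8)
The plan is to follow the proof of Lemma~\ref{bded}, the only new ingredient being the contour integral in $t$ along the segment joining $k$ and $2k$, which I would dispose of by parametrizing it.

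First I would rewrite the two bilinear expectations occurring in the integrand of $\varepsilon_2^2(k,\eta,\hat x)$ in terms of the resolvent of $\Delta^2-\lambda^4+V$. Writing $s=t+k\eta$ and using the far-field representations of $u^\infty$ and $v^\infty$ together with $u(\cdot,\lambda)=R_V(\lambda)f$, one obtains
\[
\mathbb E[u^\infty(\hat x,s)\,v^\infty(\hat x,t)]=\frac{|C_d|^2}{4\,s^{2}t^{2}}\int_{\mathbb R^3}\int_{\mathbb R^3}e^{-{\rm i}s\hat x\cdot y}e^{{\rm i}t\hat x\cdot z}\,J(y,z;s,t)\,\mathrm{d}y\,\mathrm{d}z,
\]
where $J(y,z;s,t)$ is built from $K_f$ and the Schwarz kernels $G_{V,s}$ and $G_{V,{\rm i}t}$ of $R_V(s)$ and $R_V({\rm i}t)$ exactly as the kernel $J(y,z;k)$ in the proof of Lemma~\ref{bded}: it equals $K_f(y,z)$ minus the two one-scattering terms $\int V(z)G_{V,{\rm i}t}(z,\sigma)K_f(y,\sigma)\,\mathrm{d}\sigma$ and $\int V(y)G_{V,s}(\tau,y)K_f(\tau,z)\,\mathrm{d}\tau$, plus the two-scattering term $\int\int V(y)V(z)K_f(\sigma,\tau)G_{V,{\rm i}t}(z,\sigma)G_{V,s}(y,\tau)\,\mathrm{d}\sigma\,\mathrm{d}\tau$; the factor $\mathbb E[v^\infty(\hat x,s)\,u^\infty(\hat x,t)]$ is treated identically.

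Next I would fix the slab. As $t$ runs over $\{k(1+\theta):\theta\in[0,1]\}$, every frequency argument entering $J$ has the form $rk$ with $r\in[1,3)$ (namely $t=k(1+\theta)$ and $s=k(1+\theta+\eta)$), together with its rotation ${\rm i}rk$; note $\Re(rk)\ge\Re k$ and $|\Im(rk)|\le 3|\Im k|$. Since $\mathscr S\cap-{\rm i}\mathscr S$ contains a logarithmic neighbourhood of the first quadrant, one may choose $h_0>0$ small and $K_0>0$ large---depending only on $\operatorname{supp}V$ and $\|V\|_{L^\infty}$ via Lemma~\ref{resolvent}---so that the enlarged slab $\widetilde{\mathcal R}:=(K_0,+\infty)\times(-3h_0,3h_0)$ and its image ${\rm i}\widetilde{\mathcal R}$ both lie in $\mathscr S\cap-{\rm i}\mathscr S$. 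Then Lemma~\ref{resolvent} yields $\|VR_V(\lambda)\phi\|_{L^2}\lesssim|\lambda|^{-3}\|\phi\|_{L^2}$ for $\phi\in L^2_{\rm comp}$ and all $\lambda$ occurring above, so $J(\cdot,\cdot;s,t)$ is analytic in $k\in\mathcal R$ as an $L^2(D\times D)$-valued map with
\[
\|J(\cdot,\cdot;s,t)\|_{L^2(D\times D)}\lesssim\|K_f\|_{L^2(D\times D)}\big(1+|k|^{-3}+|k|^{-6}\big)\lesssim1,
\]
the final step using Lemma~\ref{Kernel}, valid since $m>d-1=2$.

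Finally I would assemble the estimate. For $k\in\mathcal R$ the imaginary parts of $s$ and $t$ are $O(h_0)$, so $|e^{-{\rm i}s\hat x\cdot y}|$ and $|e^{{\rm i}t\hat x\cdot z}|$ are $\lesssim1$ on $D\times D$; hence $|\mathbb E[u^\infty(\hat x,s)v^\infty(\hat x,t)]|\lesssim|s|^{-2}|t|^{-2}\lesssim|k|^{-4}$, and likewise for $\mathbb E[v^\infty(\hat x,s)u^\infty(\hat x,t)]$, uniformly in $\eta\in(0,1)$ and $\hat x\in\mathbb S^2$. Parametrizing the contour integral by $t=k(1+\theta)$, $\mathrm{d}t=k\,\mathrm{d}\theta$, $\theta\in[0,1]$, rewrites $\varepsilon_2^2(k,\eta,\hat x)$ as an integral over the fixed interval $[0,1]$ of a $k$-holomorphic integrand with locally uniform bounds, so $\varepsilon_2^2(\cdot,\eta,\hat x)$ is analytic in $\mathcal R$; moreover the prefactor $1/k$ cancels the Jacobian $k$ of this substitution, leaving $|\varepsilon_2^2(k,\eta,\hat x)|\lesssim|k|^{2(m+4)}\cdot|k|^{-8}=|k|^{2m}$, as asserted. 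The main obstacle is really the bookkeeping of the previous step---checking that the whole bounded cone $\{rk,\,{\rm i}rk:r\in[1,3),\,k\in\mathcal R\}$ fits simultaneously inside $\mathscr S\cap-{\rm i}\mathscr S$ uniformly in $\eta$---together with the routine justification of differentiating under the contour integral; the rest is a direct transcription of the proof of Lemma~\ref{bded}.
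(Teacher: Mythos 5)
Your proposal is correct and is essentially the argument the paper has in mind: the paper omits the proof of Lemma \ref{bded2}, stating only that it is similar to that of Lemma \ref{bded}, and your write-up is precisely that adaptation (same kernel $J$ built from $K_f$ and the Schwarz kernels of $R_V$ at the two frequencies, same use of Lemma \ref{resolvent} on a slab whose dilates by $r\in[1,3)$ and their rotations by ${\rm i}$ stay inside $\mathscr S\cap-{\rm i}\mathscr S$, with the only new ingredient being the parametrization $t=k(1+\theta)$ so that the Jacobian $k$ cancels the prefactor $1/k$). Your reading of the extended data as a product of the two expectations $\mathbb E[u^\infty v^\infty]$, by analogy with \eqref{extension}, is the intended one.
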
 

Now using the estimate \eqref{crucial} and Lemma \ref{bded2}, we obtain the stability
estimate by applying the analytic continuation as in the proof of Theorem \ref{Thm3.1}.
\begin{theorem}\label{Thm3.2}
	Assume that $V \in L^\infty_{\rm comp}(\mathbb R^3)$ and the random source $f$ satisfies \emph{\emph{Assumption (A)}} with $2<m<9$ and $h \in \mathcal{C}_s$. 
	Then the following stability estimate
	holds:
	\begin{align*}
		\|h\|_{L^2(\mathbb{R}^3)}^2
		\lesssim K^{\beta_0}\varepsilon_2^2+\frac{1}{K^\beta(\ln|\ln \varepsilon_2|)^\beta},
	\end{align*}
	where $\beta_0=dt+2m$ and $\beta = \min\{\frac{\widetilde{\delta}_2}{2}-\frac{dt}{2}, st\}$. Here the positive constant $t$ satisfies $0<t<\frac{\widetilde\delta_2}{d}$, where $\widetilde\delta_2<\min\{1,\delta_2\}$ with $\delta_2$ given by \eqref{delta2}.
\end{theorem}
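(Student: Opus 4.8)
The plan is to run the argument of Theorem~\ref{Thm3.1} essentially verbatim, with the estimate \eqref{crucial} replacing \eqref{STEP1} and Lemma~\ref{bded2} replacing Lemma~\ref{bded}. The only genuinely new analytic ingredient, namely the decay \eqref{oneorder3} of the band‑averaged first‑order term obtained \emph{without} invoking the resolvent estimate, has already been established in the preceding analysis (together with the extended range of $m$ recorded in \eqref{delta2}); what remains for the theorem is a bookkeeping argument built on the quantitative analytic continuation principle.

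First I would apply Lemma~\ref{QAC} to the meromorphically extended quantity $\varepsilon_2^2(z,\eta,\hat x)$ on the infinite slab $\mathcal{R}$. By Lemma~\ref{bded2} this function is analytic on $\mathcal{R}$ with $|\varepsilon_2^2(k,\eta,\hat x)|\lesssim|k|^{2m}$ there, while on the real interval $(K_0,K]$ it is bounded by $\varepsilon_2^2$ by construction; hence Lemma~\ref{QAC} yields
\[
|\varepsilon_2^2(k,\eta,\hat x)|\lesssim k^{2m}\varepsilon_2^{2\mu(k)},\qquad k>K,
\]
uniformly in $\eta\in(0,1)$ and $\hat x\in\mathbb S^2$, with the explicit $\mu$ of that lemma. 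Fixing an auxiliary parameter $A>K$ and using $\mu(k)\gtrsim ce^{-\sigma A}$ on $(K,A]$ for some $\sigma>0$, together with $e^{-x}\le 6!/x^6$, this becomes $|\varepsilon_2^2(k,\eta,\hat x)|\lesssim k^{2m}e^{6\sigma A}|\ln\varepsilon_2|^{-6}$ on $(K,A]$, uniformly in $\eta$ and $\hat x$.

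Next I would split into the two cases of Theorem~\ref{Thm3.1}. In Case~1, when $K\le\frac{1}{2\sigma}\ln|\ln\varepsilon_2|$, take $A=\frac{1}{2\sigma}\ln|\ln\varepsilon_2|$, so that $\sup_{\eta,\hat x}\varepsilon_2^2(A,\eta,\hat x)\lesssim A^{2m}|\ln\varepsilon_2|^{-3}$; plugging this into \eqref{crucial} with $k=A$ controls $\int_{\mathbb S^2}|\widehat h(r\hat x)|^2\,\mathrm{d}\sigma(\hat x)$ for every $r\in(0,A)$, after which integrating in polar coordinates over $|\xi|\le A^{t}$, estimating $\int_{|\xi|>A^{t}}|\widehat h(\xi)|^2\,\mathrm{d}\xi\lesssim A^{-2st}$ via $h\in\mathcal C_s$, and combining gives $\|h\|_{L^2}^2\lesssim(\ln|\ln\varepsilon_2|)^{-2\beta}$; since $K\lesssim\ln|\ln\varepsilon_2|$ in this regime this is $\lesssim K^{-\beta}(\ln|\ln\varepsilon_2|)^{-\beta}$. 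In Case~2, when $K\ge\frac{1}{2\sigma}\ln|\ln\varepsilon_2|$, one instead applies \eqref{crucial} directly at $k=K$, splits $\{|\xi|\le K^{t}\}\cup\{|\xi|>K^{t}\}$, and uses the a priori bound on the high‑frequency tail together with $K\gtrsim\ln|\ln\varepsilon_2|$ to obtain $\|h\|_{L^2}^2\lesssim K^{\beta_0}\varepsilon_2^2+K^{-2\beta}\lesssim K^{\beta_0}\varepsilon_2^2+K^{-\beta}(\ln|\ln\varepsilon_2|)^{-\beta}$. Combining the two cases finishes the proof.

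The step that needs the most attention is matching exponents: one must verify that the polynomial factors $A^{2m}$ and $A^{dt}$ produced along the way are dominated, which forces exactly $\widetilde\delta_2<\min\{1,\delta_2\}$ with $\delta_2$ as in \eqref{delta2} and $0<t<\widetilde\delta_2/d$ so that $\beta=\min\{\tfrac{\widetilde\delta_2}{2}-\tfrac{dt}{2},\,st\}>0$, which are the hypotheses of the statement. Apart from this bookkeeping, the only difference from Theorem~\ref{Thm3.1} is that \eqref{crucial} carries the band‑averaged data discrepancy $\varepsilon_2$ rather than the pointwise $\varepsilon_1$, which only mildly affects the polar‑coordinate integration in Case~1; so I do not expect any essential obstacle, the substantive work having been done in establishing \eqref{oneorder3}.
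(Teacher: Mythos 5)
Your proposal matches the paper's own treatment: the paper proves Theorem \ref{Thm3.2} precisely by citing the estimate \eqref{crucial} and Lemma \ref{bded2} and then repeating the analytic-continuation argument of Theorem \ref{Thm3.1} (Lemma \ref{QAC}, the two cases $K\lessgtr\frac{1}{2\sigma}\ln|\ln\varepsilon_2|$, and the low/high frequency splitting of $\widehat h$), with the substantive new input being the band-averaged first-order estimate \eqref{oneorder3} and the range \eqref{delta2} established beforehand. No essential difference from the paper's route.
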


To conclude this section, we discuss the stability by using far-field pattern averaged over the high-frequency band in two dimensions.
A natural idea is to apply the arguments for Theorem \ref{Thm3.2}. However,
since the Green function in two dimensions is $\frac{{\rm i}}{4}H_0^{(1)}(\lambda |x-y|)$ which is more complicated to analyze,
the proposed method for three dimensions may not be applicable. 
    This issue can be fixed  by assuming that the supports of the source $f$ and the potential $V$ are disjoint such that $|x-y|$ has a positive lower bound for $x \in {\rm supp}\,f$ and $y \in {\rm supp}\,V$. Under this assumption, the Hankel function has the asymptotic expansion in terms of an exponential function as follows:
 \begin{align*}
        H_0^{(1)}(\lambda |x-y|)=\sqrt{\frac{1}{z}}e^{{\rm i}(\lambda|x-y|-\frac{\pi}{4})} \sum_{j=0}^N a_j (\lambda|x-y|)^{-j} + \mathcal{O}((\lambda|x-y|)^{-N-1})
    \end{align*} with constants $a_j$.
  Therefore, we can obtain the corresponding stability in two dimensions by following the arguments in three dimensions.
    We point it out that the assumption that $f$ and $V$ have disjoint supports was also used in \cite{llm}.

\section{Conclusion}

This paper considers direct and inverse random source scattering problems for the biharmonic Schr\"odinger operator in two and three dimensions.
For the direct problem, the meromorphic continuation of the resolvent of the biharmonic Schr\"odinger operator is investigated.
Since the random source function may be too rough to exist pointwisely, it should be understood in the sense of distribution and then more efforts
are devoted to analyze the resolvent.
We obtain an analytic domain and estimates for the resolvent. 
 As an application, well-posedness of the direct problem and a Born series expansion for the solution are achieved. Based on the analysis of the resolvent 
and the Born series expansion, we derive increasing stability estimates for the inverse random source problem by using multi-frequency far-field pattern.
A more challenging direction is to study the stability for the inverse random potential scattering problem. Such problem is nonlinear and the present method 
is no applicable.
We hope to report progresses on this subject
in future works.

\end{document}